\newtheorem{theorem}{Theorem}[section]
\newtheorem{lemma}[theorem]{Lemma}
\newtheorem{corollary}[theorem]{Corollary}
\theoremstyle{definition}
\newtheorem{definition}[theorem]{Definition}
\newtheorem{example}[theorem]{Example}
\theoremstyle{remark}
\newtheorem{conjecture}[theorem]{Conjecture}
\numberwithin{equation}{section}
\newcommand{\tr}{\operatorname{tr}}
\newcommand{\rank}{\operatorname{rank}}
\newcommand{\diag}{\operatorname{diag}}
\newcommand\BLUE{\color{blue}}
\begin{document}

\title[Factorization of a prime matrix]{Factorization of a prime matrix in even blocks}


\author[Haoming Wang]{Haoming Wang}
\address{School of Mathematics, Sun Yat-sen University, Guangzhou {\rm510275}, China}
\email{wanghm37@mail2.sysu.edu.cn}
\thanks{}


\subjclass[2020]{Primary 11Axx, 15Axx; Secondary 11A51, 15A69, 15A99.}
\keywords{Factorization of prime matrix, Diagonal Form, Tensor product, Matrix multiplication.}
\date{}

\dedicatory{}

\begin{abstract} In this paper, a matrix is said to be prime if the row and column of this matrix are both prime numbers. We establish various necessary and sufficient conditions for developing matrices into the sum of tensor products of prime matrices. For example, if the diagonal of a matrix blocked evenly are pairwise commutative, it yields such a decomposition. The computational complexity of multiplication of these algorithms is shown to be $O(n^{5/2})$. In the section 5, a decomposition is proved to hold if and only if every even natural number greater than 2 is the sum of two prime numbers.
\end{abstract}

\maketitle
\tableofcontents

\section{Introduction}
In this note, a prime matrix is a $p \times q$ matrix with both $p$ and $q$ prime numbers. Assume $m$ and $n$ are natural numbers with the prime decomposition. The aim of this paper is two-fold:
\begin{itemize}
	\item Reducing an $m\times n$ matrix into the sum of tensor products of prime matrices.
	\item Reducing an $n\times n$ Hermite matrix into the sum of tensor products of prime matrices.
\end{itemize}

An $m \times n$ matrix might be developed into the sum of tensor products of prime matrices, 
In linear algebra, an $m \times n$ matrix $A$ has the spectral decomposition in the sense that
\begin{equation}
	A = \sum_{i=1}^{\min\{m,n\}}\sigma_{i} u_{i}v_{i}^{*},
\end{equation}
where $U = [u_{1},u_{2},\dots,u_{m}]$ and $V = [v_{1},v_{2}, \dots,v_{n}]$ are orthogonal matrices. It's quite hard to compute the $\sigma_{i}, u_{i}$ and $v_{i}$ when the size of matrix is large. The usual method relies on the computation of eigenvalues and eigenvectors of $AA^{*}$ and $A^{*}A$. If $n=m$, with the aid of usual elimination algorithm, the computational complexity is $O(n^{3})$, which increases the same as a cubic polynomial as the size of matrix gets large. However, this could be remedied provided a tensor form exists by the prime number decomposition theorem. 

If $m = p_{1}^{r_{1}} p_{2}^{r_{2}} \dots p_{k}^{r_{k}}$ and $n = q_{1}^{s_{1}} q_{2}^{s_{2}} \dots q_{l}^{s_{l}}$ are the prime decomposition of $m$ and $n$ respectively, the $m \times n$ matrix $A$ could be blocked according to the prime decomposition.  For example, an $m^{\prime} p \times n^{\prime}  q$ matrix $A$ may be blocked as follows:
\begin{equation}
		A = \begin{bNiceMatrix}[first-row,last-col]
		q & \dots & q &\\ 
		A_{11} & \dots & A_{1n^{\prime}} & p\\
		\vdots & \ddots & \vdots & \vdots \\
		A_{m^{\prime}1} & \dots & A_{m^{\prime}n^{\prime}}& p\\
	\end{bNiceMatrix}
	\label{eq: block matrix 1}
\end{equation}
where each $A_{ij}$ has $p$ rows and $q$ columns. If the diagonal set of block sub-matrices $\{A_{ii}\}$ consists of a pairwise commutative family of matrices of $p$ rows and $q$ columns in the sense that the following equation is satisfied
\begin{equation}	
	AB^{*} = BA^{*}, \quad  A^{*}B = B^{*}A,
	\label{eq: commutativity}
\end{equation}
for each $A$ and $B$ in the family of matrices, then there exists a set of orthonormal column vectors $\{b_{i}\}$ and $\{c_{j}\}$ which diagonalizes each matrix $A_{ii}$, say,
\begin{equation}
	A_{ii} = \sigma_{ii}^{k} b_{k}c_{k}^{*},
\end{equation}
where the sum is taken over repeated index. Therefore, the $m^{\prime} p \times n^{\prime}  q$ block matrix can be developed into the sum of tensor products
\begin{equation}
(1_{m^{\prime } \times m^{\prime }} \otimes B)^{*}	 A (1_{n^{\prime } \times n^{\prime }} \otimes C) =  \begin{bNiceMatrix}[first-row,last-col]
		q & \dots & q & \\ 
		\diag(\sigma_{11}^{k}) & \dots & A^{\prime}_{1n^{\prime}} & p\\
		\vdots & \ddots & \vdots & \vdots \\
		A^{\prime}_{m^{\prime}1} & \dots & \diag(\sigma_{m^{\prime}n^{\prime}}^{k}) & p\\
	\end{bNiceMatrix}
	\label{eq: block matrix 2}
\end{equation}
where $1_{m\times n}$ denote the $m\times n $ matrix with entries all $1$, $B = [b_{1},b_{2},\dots,b_{p}]$, $C = [c_{1},c_{2}, \dots,c_{q}]$ and $A_{ij}^{\prime}(k,l) = b^{*}_{k} A_{ij} c_{l}$. The symbol $\otimes$ means the tensor product defined for an $m \times n$ matrix $A$ and $p \times q$ matrix $B$: 
\begin{equation}
	A \otimes B = \begin{bNiceMatrix}
		a_{11}B & a_{12}B & \dots & a_{1n}B \\
		a_{21}B & a_{22}B & \dots & a_{2n}B \\
		\vdots &  & \ddots &  \cdots \\
		a_{m1}B & a_{m2}B & \dots & a_{mn}B 
	\end{bNiceMatrix}
\end{equation}
is defined to be a $pm\times qn$ block matrix. We will defer the non-square problem to the section 6.

The second aim of this paper is to reduce an $n \times n$ Hermite matrix $A$ where $A = A^{*}$ into the sum of tensor products in the following four type.
\begin{itemize}  
	\item[$T_{1}$] The matrix $A$ blocked according to (\ref{eq: block matrix 1}) can be developed into the sum
	$$\displaystyle A = \sum_{i,j} B_{ij} \otimes C_{ij},$$ where $B_{ij}$ and $C_{ij}$ are $n^{\prime} \times n^{\prime}$ and  $q \times q$ matrices respectively.  
	\item[$T_{1\frac{1}{2}}$] The matrix $A$ blocked according to (\ref{eq: block matrix 1}) can be developed into the sum
	$$\displaystyle A = \sum_{i} B_{i} \otimes C_{i},$$ where $B_{i}$ and $C_{i}$ are $n^{\prime} \times n^{\prime}$ and  $q \times q$ matrices respectively. 
	\item[$T_{2}$] The matrix $A$ blocked according to (\ref{eq: block matrix 1}) can be developed into the sum
	$$\displaystyle A = \sum_{i,j} \alpha_{ij} B_{i} \otimes C_{j},$$ where $\alpha_{ij}$, $B_{i}$ and $C_{j}$ are  real numbers, $n^{\prime} \times n^{\prime}$ and  $q \times q$  matrices respectively.
	\item[$T_{3}$] The matrix $A$ blocked according to (\ref{eq: block matrix 1}) can be developed into the sum
	$$\displaystyle A =  B \otimes C,$$ where $B$ and $C$ are $n^{\prime} \times n^{\prime}$ and  $q \times q$  matrices respectively.
\end{itemize}
where the required properties for $\alpha_{ij}, B_{ij},B_{i}, B, C_{ij},C_{i}, C$ will be explained 
in the section 2 explicitly. Here we only illustrate the main idea of this procedure. 
\begin{itemize}
	\item First, examine whether the diagonal set of block matrix $A$ in (\ref{eq: block matrix 1}) is pairwise commutative:
	\[A_{ii}A_{jj} = A_{jj}A_{ii},\quad i,j=1,2,\dots, n^{\prime},\]
	This requires $O(n^{5/2})$ computational complexity.
	\item Second, examine whether the lower-triangular set of block matrix $A$ in (\ref{eq: block matrix 1}) is pairwise commutative. 
	\[A_{ij}A_{i^{\prime}j^{\prime}} = A_{i^{\prime}j^{\prime}}A_{ij},\quad i = j,\dots,n^{\prime}, i^{\prime}=j^{\prime},\dots, n^{\prime}, j,j^{\prime} = 1,\dots, n^{\prime}\]
	This requires $O(n^{5/2})$ computational complexity.
	\item Third, examine whether the diagonal set of the family of $n^{\prime} \times n^{\prime}$ matrices $\{B_{i}\}$ in (\ref{eq: block matrix 2}) where $B_{i}(k,l) = b^{*}_{i} A_{kl} c_{i}$ is pairwise commutative.
	\[B_{i}B_{j} = B_{j}B_{i},\quad i,j=1,2,\dots, n^{\prime},\] 
	This requires $O(n^{5/2})$ computational complexity.
	\item Finally, compute the rank of $n^{\prime} \times n^{\prime}$ real matrix $(\alpha_{ij})$  where $\alpha_{ij} = b_{j}B_{ii}b_{j}^{*}$ via the similar diagonal form:
	\[P_{1}P_{2}\dots P_{n}\begin{bmatrix}
		1 &  & & O \\
		 & 1 & & \\
		 &  &\ddots & \\
		O & & &
	\end{bmatrix}Q_{n}^{-1}\dots Q_{2}^{-1}Q_{1}^{-1}
	\]
	This requires $O(n^{3/2})$ computational complexity.
\end{itemize}
The total computational complexity of these algorithm is therefore $O(n^{5/2})$, much less than that of the usual matrix multiplication, which is known to be $O(n^{3})$. For this algorithm requires, in the worst case, $n^{3}$ multiplications of scalars and $n^{3}-n^{2}$ additions for computing the product of two $n \times n$ matrices. This enables us to utilize the above algorithm as a first examination so that if passed, it requires at most 
$n^{5/2} + n^{2} - n^{3/2}$ multiplications of scalars and
$n^{5/2} - n^{3/2}$ additions for multiplying two $n\times n$ matrices. These methods are introduced in section 4 and their algorithms are illustrated in Appendix B.

However, we are only interested in the case $n$ is divisible, that is, $n$ contains at least two prime factors that may not be distinct since if the order $n$ of the square matrix $A$ is a prime number, this problem has only trivial solution,. According to the fundamental theorem of arithmetic, every natural number has a unique decomposition as the product of prime numbers. The proof of arithmetic fundamental theorem is inductive so we use the induction to derive our desired decomposition too. We begin to start with a decomposable number $n =a b$ where $a$ and $b$ are two natural numbers larger than 2. 
 
\section{The case $n = ab$}
Here we give some assumptions about the reduction of an square matrix $A$ of order $n=ab$ into the sum of tensor products. Familiar with them is helpful in our coming discussion. Let's look at the Hermite case.
\begin{definition} Let $A$ be an Hermite matrix of order $ab$. Block $A$ as
	
	\begin{equation}
		A = \begin{bmatrix}
			A_{11} & A_{12} & \dots & A_{1a}\\
			A_{21} & A_{22} & \dots & A_{2a}\\
			\vdots &  & \ddots & \vdots \\
			A_{a1} & A_{a2} & \dots & A_{aa}
		\end{bmatrix}
		\label{eq: block matrix}
	\end{equation}
	where each $A_{ij}$ is an Hermite matrix of order $b$. We assume $A$ is decomposable of the type
	\begin{itemize}
		\item[$T_{1}$] {\it with respect to some column bases} if there exist orthonormal column vectors $\{c_{j}\}$ of order $b$ such that $A$ can be developed into the sum
		\[A = \sum_{i,j} B_{ij} \otimes C_{ij}\]
		where $C_{ij} = c_{i} c_{j}^{*}$ and $B_{ij} = \left(c_{i}^{*} A_{kl} c_{j}\right)$. In this case, $ B_{ij}(k,k)  = \beta_{i}(k)\delta_{ij}$. In a word, 
		$B_{ij}$ vanishing on diagonal if $i\neq j$.
		\item[$T_{1\frac{1}{2}}$] {\it with respect to some mutually orthogonal row coefficients} if it is $T_{1}$ decomposable and additionally, $B_{ij} = B_{ii}\delta_{ij}$, i.e. 
		$B_{ij}$ vanishing if $i\neq j$.
		\item[$T_{2}$] {\it with respect to some row-column bases} if it is $T_{1}$ decomposable and there exist further orthonormal column vectors $\{b_{i}\}$ of order $a$ such that $A$ can be developed into the sum
		\[A = \sum_{i,j} \alpha_{ij} B_{i} \otimes C_{j},\]
		where $B_{i} = b_{i}b_{i}^{*}$ and $C_{j} = c_{j}c_{j}^{*}$.
		\item[$T_{3}$] {\it totally decomposable with respect to some row-column bases} if it is $T_{2}$ decomposable and additionally, there exist $\beta_{i}, \gamma_{j}$ such that $\alpha_{ij} = \beta_{i} \gamma_{j}$ and
		\[A =  B \otimes C \]
		where $B = \sum_{i} \beta_{i} b_{i} b_{i}^{*}$ and $C = \sum_{j} \gamma_{j} c_{j}c_{j}^{*}$.
	\end{itemize}
\end{definition}

As for the non-Hermite case, we will assume $A$ has $n$ distinct eigenvalues. In this setting, replace the word orthogonal with independent and rephrase word for word in the above paragraph. We leave this work to the reader.

\section{The canonical diagonal form}

We define a binary operation between an $m \times n$ matrix $A$ and a $k\times l$ matrix $B$ when $n$ and $k$ has common divisor $a$ as follows: block $A = [A_{1}, A_{2}, \dots, A_{a}]$ and $B = [B^{*}_{1}, B^{*}_{2}, \dots, B^{*}_{a}]^{*}$ according to the partition of $n = as$ columns of $A$ and $k = at$ rows of $B$ and $m\times s$ sub-matrices $A_{i}$ and $t \times l$ sub-matrices $B_{i}$, $i=1,2,\dots,a$. 
\begin{equation}
	A \otimes_{a,a} B = \sum_{i=1}^{a} A_{i} \otimes B_{i}
	\label{eq: new tensor}
\end{equation}
The result $A \otimes_{a,a}B$ is a $tm\times ls$ matrix, dividing the factor of $km\times ln$ both by $a$. 

Besides, we define a unary operation on an $m \times n$ matrix $A$ when $m$ and $n$ has both common divisor $a$: block $A$ according to $m=ar$ and $n=as$
\begin{equation}
	\begin{bmatrix}
		A_{11} & A_{12} & \dots & A_{1s}\\
		A_{21} & A_{22} & \dots & A_{2s}\\
		\vdots &  & \ddots & \vdots \\
		A_{r1} & A_{r2} & \dots & A_{rs}
	\end{bmatrix}
\end{equation}
where $A_{ij}$s are $a \times a$ matrices and then define
\begin{equation}
	\underset{a,a}{\bigoplus} A = \sum_{i,j} A_{ij}
	\label{eq: new sum}
\end{equation}

Now return to our topic of decomposing an Hermite matrix of order $n =ab$. We will denote the block matrix $C = (C_{ij})$ according to $C_{ij}$ as described in section 2 if it appears.
\begin{figure}[h!]
	\centering
	\begin{minipage}{0.49\linewidth}
		\centering
		\[D_{1} = \begin{bmatrix}
			B_{11} & B_{12} & \dots & B_{1a}\\
			B_{21} & B_{22} & \dots & B_{2a}\\
			\vdots &  & \ddots & \vdots \\
			B_{a1} & B_{a2} & \dots & B_{aa}
		\end{bmatrix}\]	
		\caption{The canonical diagonal form with $A = \oplus_{a,a} \left(D_{1} \otimes_{a,a} C \right)$}
		\label{fig: T_{1}}
	\end{minipage}
	\begin{minipage}{0.49\linewidth}
		\centering
		\[D_{1\frac{1}{2}} = \begin{bmatrix}
			B_{11} & \dots & & O\\
			& B_{22} &  & \vdots\\
			\vdots &  & \ddots & \\
			O &  & \dots & B_{aa}
		\end{bmatrix}\]
		\caption{The canonical diagonal form with $A = \oplus_{a,a} \left(D_{1\frac{1}{2}} \otimes_{a,a}  C\right)$}
		\label{fig: T_{1}1/2}
	\end{minipage}
\end{figure}		
\begin{figure}[h!]
	\centering
	\begin{minipage}{0.49\linewidth}
		\centering
		\[D_{2} = \begin{bmatrix}
			\alpha_{11} & \alpha_{12} & \dots & \alpha_{1a}\\
			\alpha_{21} & \alpha_{22} & \dots & \alpha_{2a}\\
			\vdots &  & \ddots & \vdots\\
			\alpha_{a1} & \alpha_{a2} & \dots & \alpha_{aa}
		\end{bmatrix}\]	
		\caption{The canonical diagonal form with $A = D_{2} \cdot (B\otimes C)$.}
		\label{fig: T_{2}}
	\end{minipage}
	\begin{minipage}{0.49\linewidth}
		\centering
		\[D_{3} = \begin{bmatrix}
			\beta_{1}\gamma_{1} & \beta_{1}\gamma_{2} & \dots & \beta_{1}\gamma_{a}\\
			\beta_{2}\gamma_{1} & \beta_{2}\gamma_{2} & \dots & \beta_{2}\gamma_{a}\\
			\vdots &  & \ddots & \vdots\\
			\beta_{a}\gamma_{1} & \beta_{a}\gamma_{2} & \dots & \beta_{a}\gamma_{a}
		\end{bmatrix}\]	
		\caption{The canonical diagonal form with $A = D_{3} \cdot (B\otimes C)$.}
		\label{fig: T_{3}}
	\end{minipage}
\end{figure}

Since an Hermite matrix may have a canonical diagonal decomposition, we tabulate some equivalent conditions for them to hold explicitly in Table (\ref{tab: equivalent conditions}):
\begin{longtable}{ccl}
	\caption{Comparison of $T_{1},T_{1\frac{1}{2}},T_{2}$ and $T_{3}$.}\\
		\hline
		$T_{1}$ & $\Leftrightarrow$  & $A = \sum_{i,j} B_{ij} \otimes C_{ij}$ where $C_{ij}=c_{i}c_{j}^{*}$.\\
		&  $\Leftrightarrow$   & $C_{ij} = c_{i}c_{j}^{*}$ consists of common eigenvectors $c_{i}$ of $A_{kk}$, independent of \\
		& & $k$, corresponding to eigenvalues $B_{ii}(k,k)$.\\	
		\hline
		$T_{1\frac{1}{2}}$ & $\Leftrightarrow$ & $A = \sum_{i} B_{i} \otimes C_{i}$ where $C_{i} = c_{i}c_{i}^{*}$.\\
		&  $\Leftrightarrow$   & $C_{i} = c_{i}c_{i}^{*}$ consists of common eigenvectors $c_{i}$ of $A_{kl}$, independent of \\
		& & $k,l$, corresponding to eigenvalues $B_{i}(k,l)$.\\	
		\hline 
		$T_{2}$ & $\Leftrightarrow$ & $A = \sum_{i,j} \alpha_{ij} B_{i} \otimes C_{j}$ where $B_{i} = b_{i}b_{i}^{*}$ and $C_{j} = c_{j}c_{j}^{*}$\\
		& $\Leftrightarrow$ & $B_{i} = b_{i}b_{i}^{*}$, $C_{j} = c_{j}c_{j}^{*}$ consists of eigenvectors $b_{i}\otimes c_{j}$ of $A$,\\
		& & corresponding to eigenvalues $\alpha_{ij}$.\\
		\hline
		$T_{3}$ & $\Leftrightarrow$ & $A= B \otimes C$ and $B = \sum_{i} \beta_{i} b_{i}b_{i}^{*}$, $C = \sum_{j} \gamma_{j} c_{j} c^{*}_{j}$.\\ 
		& $\Leftrightarrow$ & $b_{i}\otimes c_{j}$ are eigenvectors of $A$, corresponding to eigenvalues $\beta_{i}\gamma_{j}$.\\
		\hline		
		\label{tab: equivalent conditions}
\end{longtable}

From Table (\ref{tab: equivalent conditions}), there are some obvious deductions.

\begin{theorem} Let $A$ be an $n \times n$ Hermite matrix blocked in the form (\ref{eq: block matrix}).
	\begin{itemize}
		\item[1.] $T_{3} \Rightarrow T_{2} \Rightarrow T_{1\frac{1}{2}} \Rightarrow T_{1}$.
		\item[2.] The converses $T_{1} \Rightarrow T_{1\frac{1}{2}}$ and $T_{1} \Rightarrow T_{2}$ and $T_{2} \Rightarrow T{3}$ are already made clear in the definition.
		\item[3.] If the diagonal set $\{A_{ii}\}$ is pairwise commutative, then $T_{1}$ holds.
		\item[4.] If the lower-triangular set $\{A_{ij}\}$ is pairwise commutative, then $T_{1\frac{1}{2}}$ holds. 
		\item[5.] $T_{1\frac{1}{2}}$ + $b_{j}$ is the eigenvector of $B_{i}$, independent of $i$, corresponding to eigenvalue $\alpha_{ij}$ $\Rightarrow T_{2}$.
		\item[6.] $T_{2}$ + $\alpha_{ij}=\beta_{i} \gamma_{j} \Rightarrow T_{3}$;
		\item[7.] $T_{1\frac{1}{2}}$ + $B_{jj} = \omega_{j} B$ where $\omega_{j}$ is constant and $B$ Hermite matrix $\Rightarrow T_{3}$. 
	\end{itemize}
\end{theorem}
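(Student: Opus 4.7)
The plan is to dispatch the seven items using the equivalent characterizations of $T_1,T_{1\frac{1}{2}},T_2,T_3$ collected in Table~\ref{tab: equivalent conditions}. The implications in item~1 and the refinements in items~5--7 reduce to algebraic regrouping of sums, while items~3 and~4 carry the substantive spectral content.

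For item~1, I would verify each implication directly. $T_3\Rightarrow T_2$: set $\alpha_{ij}:=\beta_i\gamma_j$. $T_2\Rightarrow T_{1\frac{1}{2}}$: rewrite $A=\sum_{i,j}\alpha_{ij}B_i\otimes C_j=\sum_j\widetilde B_j\otimes C_j$ with $\widetilde B_j:=\sum_i\alpha_{ij}B_i$. $T_{1\frac{1}{2}}\Rightarrow T_1$: take $B_{ii}:=B_i$ and $B_{ij}:=0$ for $i\neq j$, so the $T_1$ diagonal condition $B_{ij}(k,k)=\beta_i(k)\delta_{ij}$ becomes vacuous. Item~2 is a remark about non-implications and needs no argument. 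Items~5--7 are also bookkeeping: in item~5, $b_j$ being a common eigenvector of each $B_i$ with eigenvalue $\alpha_{ij}$ gives $B_i=\sum_j\alpha_{ij}b_jb_j^*$, which substituted into $A=\sum_i B_i\otimes c_ic_i^*$ produces $A=\sum_{i,j}\alpha_{ij}(b_jb_j^*)\otimes(c_ic_i^*)$, i.e.~$T_2$ after relabeling; in item~6, $\alpha_{ij}=\beta_i\gamma_j$ factors the $T_2$ double sum as $(\sum_i\beta_iB_i)\otimes(\sum_j\gamma_jC_j)$; in item~7, $B_{jj}=\omega_jB$ collapses $A=\sum_j\omega_jB\otimes C_j$ to $B\otimes(\sum_j\omega_jC_j)$, a single tensor product.

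The real content lies in items~3 and~4. For item~3, $A=A^*$ forces the diagonal blocks $\{A_{ii}\}$ to be Hermitian, so pairwise commutativity yields a single orthonormal eigenbasis $\{c_k\}$ with $A_{kk}c_j=\mu_{k,j}c_j$; then $B_{ij}(k,k)=c_i^*A_{kk}c_j=\mu_{k,j}\delta_{ij}$, which is exactly the $T_1$ characterization. For item~4, with $\{A_{ij}\}_{i\geq j}$ pairwise commutative, every $A_{ij}$ commutes with every diagonal block, and taking adjoints of those relations (using $A_{ji}=A_{ij}^*$) extends the commutation to the upper-triangular blocks. Starting from the common eigenbasis $\{c_k\}$ of $\{A_{ii}\}$ supplied by item~3, each $A_{ij}$ preserves every joint $\{A_{ii}\}$-eigenspace, and I would refine $\{c_k\}$ inside each joint eigenspace so that all $A_{ij}$ simultaneously become diagonal in the refined basis, producing $c_i^*A_{kl}c_j=0$ for $i\neq j$ and hence $T_{1\frac{1}{2}}$.

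The main obstacle will be the refinement step in item~4. On one-dimensional joint $\{A_{ii}\}$-eigenspaces nothing is needed, but on higher-dimensional joint eigenspaces a single off-diagonal block $A_{ij}$ need not be unitarily diagonalizable unless it is normal, i.e.~commutes with its own adjoint $A_{ji}$, which is not in the raw hypothesis. I plan to extract this normality from the Hermitian structure of $A$, either by taking adjoints of carefully paired commutativity relations among the lower-triangular family, or by passing to a generic real linear combination of that commuting family whose simple spectrum forces every matrix that commutes with it---including the upper-triangular blocks obtained by adjunction---to be simultaneously diagonalized along with it.
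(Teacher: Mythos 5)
Your handling of items 1, 2, 5, 6, 7 is correct and amounts to the algebraic bookkeeping you describe; the paper's own proof does not spell any of these out (it simply declares ``We only prove 3 and 4'' and then gives a single cryptic sentence), so your version is in fact more explicit than the source. Item 3 is also right: the diagonal blocks $A_{ii}$ of a Hermitian $A$ are themselves Hermitian, pairwise commutativity of Hermitian matrices gives a common orthonormal eigenbasis $\{c_k\}$, and $c_i^*A_{kk}c_j=\mu_{k,i}\delta_{ij}$ is exactly the $T_1$ characterization from Table~\ref{tab: equivalent conditions}.

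The worry you flag in item 4 is a genuine gap, and your proposed repairs do not close it. Under the ordinary reading $XY=YX$ of ``pairwise commutative,'' the statement is simply false. Take $a=b=2$, $A_{11}=A_{22}=I_2$, $A_{21}=N=\begin{bmatrix}0&1\\0&0\end{bmatrix}$, $A_{12}=N^*$. Then $A$ is Hermitian and the lower-triangular family $\{I_2,N,I_2\}$ trivially pairwise commutes, yet no orthonormal pair $\{c_1,c_2\}$ in $\mathbb{C}^2$ satisfies $c_1^*Nc_2=c_2^*Nc_1=0$ (one of $\overline{c_1(1)}c_2(2)$, $\overline{c_2(1)}c_1(2)$ must be nonzero once $\{c_1,c_2\}$ is orthonormal), so $B_{12}\neq 0$ for every admissible basis and $T_{1\frac12}$ fails. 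This defeats both of your backup plans: normality of $A_{ij}$ does not follow from the hypothesis (here $N$ commutes with every listed matrix yet $NN^*\neq N^*N$), and a generic linear combination of the commuting lower-triangular family is not simple in a way that forces $N^*$ to be diagonalized alongside it, since $N^*=A_{12}$ was never required to be in the commuting family.

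The paper's own proof---``any permutation $\sigma_a$ \dots\ can be decomposed into the product of the replacement \dots\ and commutativity means simultaneous diagonalization''---is a one-line appeal to simultaneous diagonalization that makes no attempt to verify the normality it silently presumes, so it does not resolve this either. The claim is only salvageable if ``pairwise commutative'' in item 4 is read in the stronger sense the paper introduces in equation~(\ref{eq: commutativity}), namely $XY^*=YX^*$ and $X^*Y=Y^*X$; applied to the pair $(A_{11},A_{21})=(I,N)$ in the example above that condition already forces $N=N^*$ and excludes the counterexample. If you want to repair item 4 rather than flag it, you should state that you are adopting the relation~(\ref{eq: commutativity}) as the definition of pairwise commutativity and redo the argument with that hypothesis in hand; as written, the step from commutation to simultaneous unitary diagonalization of the off-diagonal blocks is unjustified.
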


\begin{proof}
	We only prove 3 and 4. This is because any permutation $\sigma_{a}$ of $\{1,2,\dots,n\}$ can be decomposed into the product of the replacement $\sigma_{n} = \tau_{1}\tau_{2}\dots\tau_{m}$ and commutativity means simultaneous diagonalization.
\end{proof}

\section{The computational complexity}

Take the matrix multiplication for example, This algorithm requires, in the worst case, $n^{3}$ multiplications of scalars and $n^{3}-n^{2}$ additions for computing the product of two square $n \times n$ matrices. Its computational complexity is therefore $O(n^{3})$. If the $n^{2} \times n^{2}$ matrix blocked evenly in the form (\ref{eq: block matrix}), we execute the following algorithm:

\begin{itemize}
	\item First, examine whether the diagonal set of block matrix $A$ in 
	\[\begin{bNiceMatrix}
		\BLUE A_{11} & A_{12} & \dots & A_{1n}\\
		A_{21} & \BLUE A_{22} & \dots & A_{2n}\\
		\vdots & & \BLUE \ddots & \vdots \\
		A_{n1} & A_{n2} & \dots & \BLUE A_{nn}
		\CodeAfter
		\tikz \draw (1-|2) |- (2-|3) |- (3-|4) |- (4-|5);
		\tikz \draw (2-|1) -| (3-|2) -| (4-|3) -| (5-|4);
		\end{bNiceMatrix}\]
	is pairwise commutative:
	\[A_{ii}A_{jj} = A_{jj}A_{ii},\quad i,j=1,2,\dots, n,\]
	This step requires, in the worst case $\displaystyle  2{n \choose 2}\cdot n^{3} = n^{5} - n^{4}$ multiplication of scalars and $\displaystyle 2{n \choose 2} \cdot (n^{3} - n^{2}) = n^{5} - 2n^{4} +n^{3}$ additions.
	\item Second, examine whether the lower-triangular set of block matrix $A$ in 
	\[\begin{bNiceMatrix}
		\BLUE A_{11} & A_{12} & \dots & A_{1n}\\
		\BLUE A_{21} & \BLUE A_{22} & \dots & A_{2n}\\
		\BLUE \vdots & & \BLUE \ddots & \vdots \\
		\BLUE A_{n1} & \BLUE A_{n2} & \BLUE \dots & \BLUE A_{nn}
		\CodeAfter
		\tikz \draw (1-|2) |- (2-|3) |- (3-|4) |- (4-|5);
	\end{bNiceMatrix}\]
	is pairwise commutative. 
	\[A_{ij}A_{i^{\prime}j^{\prime}} = A_{i^{\prime}j^{\prime}}A_{ij},\quad i = j,\dots,n^{\prime}, i^{\prime}=j^{\prime},\dots, n^{\prime}, j,j^{\prime} = 1,\dots, n\]
	This step requires, in the worst case $\displaystyle  2{n^{2} \choose 2}\cdot n = n^{5} - n^{3}$ multiplication of scalars and $\displaystyle 2{n^{2} \choose 2}\cdot (n-1) = n^{5} - n^{4} - n^{3} +n^{2}$ additions.
	\item Third, examine whether the diagonal set of the family of $n \times n$ matrices $\{B_{ii}\}$ in 
	\[\begin{bNiceMatrix}
		\BLUE B_{11} & \dots &  & O\\
		& \BLUE B_{22} &  & \vdots\\
		\vdots & & \BLUE \ddots & \\
		O &  & \dots & \BLUE B_{nn}
		\CodeAfter
		\tikz \draw (1-|2) |- (2-|3) |- (3-|4) |- (4-|5);
		\tikz \draw (2-|1) -| (3-|2) -| (4-|3) -| (5-|4);
	\end{bNiceMatrix}\]
	where $B_{ii}(k,l) = b^{*}_{i} A_{kl} c_{i}$ is pairwise commutative.
	\[B_{ii}B_{jj} = B_{jj}B_{ii},\quad i,j=1,2,\dots, n,\] 
	This step requires, in the worst case $\displaystyle  2{n \choose 2}\cdot n^{3} = n^{5} - n^{4}$ multiplication of scalars and $\displaystyle 2{n \choose 2}\cdot (n^{3}-1^{2}) = n^{5} - 2n^{4} + n^{3}$ additions.
	\item Finally, examine whether the rank of $n  \times n $ real matrix $(\alpha_{ij})$ is 1
	\[\begin{bmatrix}
		{\BLUE \alpha_{11}} (= \beta_{1}\gamma_{1}) & {\BLUE \alpha_{12}} (= \beta_{1}\gamma_{2}) & \dots & {\BLUE \alpha_{1n}} (= \beta_{1}\gamma_{n})\\
		{\BLUE \alpha_{21}} (=  \beta_{2}\gamma_{1}) & {\BLUE \alpha_{22}} (= \beta_{2}\gamma_{2}) & \dots & {\BLUE \alpha_{2n}} (= \beta_{2}\gamma_{1})\\
		\vdots &  & \ddots & \vdots\\
		{\BLUE \alpha_{n1}} (= \beta_{n}\gamma_{1}) & {\BLUE \alpha_{n2}} (= \beta_{n}\gamma_{2}) & \dots & {\BLUE \alpha_{nn}} (= \beta_{n}\gamma_{n})
	\end{bmatrix}\]
	where $\alpha_{ij} = b_{j}B_{ii}b_{j}^{*}$. This could be done via the similar diagonal form:
	\[P_{1}P_{2}\dots P_{n^{\prime}}\begin{bmatrix}
		1 &  & & O \\
		&  1 & & \\
		&  &  \ddots & \\
		O & & &
	\end{bmatrix}Q_{n^{\prime}}^{-1}\dots Q_{2}^{-1}Q_{1}^{-1}
	\]
	This step requires, in the worst case $\displaystyle  2{n \choose 2}\cdot n = n^{3} - n^{2}$ multiplication of scalars and $\displaystyle 2{n \choose 2}\cdot (n-1) = n^{3} - 2n^{2} + n$ additions.
\end{itemize}

\begin{longtable}[htbp]{ccc}
	\caption{Computing multiplication of two square $n^{2} \times n^{2}$ matrices.}\\
		\hline
		& Multiplications & Additions \\
		\hline
		$T_{1}$ & $2A_{2}^{n} \cdot n^{3} + n^{n}$ & $n^{5} - 2n^{4} +n^{3}$\\
		\hline
		$T_{1\frac{1}{2}}$ & $2A^{n^{2}}_{2} \cdot n + n^{4}$ & $n^{5} - n^{4} - n^{3} +n^{2}$\\
		\hline
		$T_{2}$ & $2A^{n^{2}}_{2} \cdot n + n^{3}$ & $2n^{4} + n^{3}$\\
		\hline
		$T_{3}$ & $2A^{n}_{2} \cdot n + n^{3}$ & $n^{3} - 2n^{2} + n$\\
		\hline
	\label{tab: copmplexity}
\end{longtable}
The computational complexity is hence not surprisingly $O(n^{5/2})$, much less than usual multiplication.

\begin{theorem} If $A$ is an Hermite matrix of decomposable order $n = p_{1}^{r_{1}} p_{2}^{r_{2}}\dots  p_{s}^{r_{s}}$, then it has decomposable type $4^{r_{1} + \dots + r_{s}}$, according to Definition 2.1. The computational complexity of this algorithm is also $O(n^{5/2})$.	
\end{theorem}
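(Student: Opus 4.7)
The plan is to prove both the type count and the complexity bound by induction on the total number of prime factors $k = r_1 + r_2 + \cdots + r_s$ of $n$, counted with multiplicity. The base case is $k=2$, in which $n = p_i p_j$ is a product of two primes; here the statement is exactly the content of Section 2 and Theorem 3.1, for Definition 2.1 furnishes the four types $T_1, T_{1\frac{1}{2}}, T_2, T_3$, and the budget in Section 4 records the $O(n^{5/2})$ cost of detecting them. Note that in this base case the component sub-blocks $B_{ij}$ (or $C_{ij}$) are already of prime order, so there is nothing further to decompose.

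For the inductive step, factor $n = a\cdot b$ where $b$ is a single prime factor of $n$, so that $a$ has $k-1$ prime factors. Block $A$ as in (3.3) and apply the four top-level decompositions of Definition 2.1. In each case, the resulting ``outer'' component matrices (the $B_{ij}$ for $T_1$, the diagonal $B_i$ for $T_{1\frac{1}{2}}$, the outer factor $B$ and the scalar grid $(\alpha_{ij})$ for $T_2$ and $T_3$) are Hermite matrices of order $a$: this follows from $A^{*}=A$ together with the identity $B_{ij}(k,l) = c_k^{*} A_{ij} c_l$, which yields $B_{ij}^{*} = B_{ji}$ and in particular $B_{ii}^{*} = B_{ii}$. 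The inductive hypothesis applies to each such order-$a$ Hermite matrix and returns $4^{k-1}$ types. Composing the top-level choice with the recursive choice on each component produces $4 \cdot 4^{k-1} = 4^{k}$ decomposable types in total.

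For the complexity, let $T(n)$ denote the total work for the recursive procedure. Section 4 establishes that the top-level commutativity and rank tests cost $O(n^{5/2})$, and that each top-level type produces at most $b^{2}$ sub-problems on matrices of order $a = n/b$. This gives the recurrence
\begin{equation*}
T(n)\;\le\;b^{2}\,T(n/b)\;+\;C\, n^{5/2}.
\end{equation*}
Since $b\ge 2$, we have $b^{2}(n/b)^{5/2} = n^{5/2} b^{-1/2} \le n^{5/2}/\sqrt{2}$, so unrolling the recurrence produces a convergent geometric series whose total is $O(n^{5/2})$, uniformly in $k$ and in the prime factorization of $n$.

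The step I expect to be most delicate is the check that each of the four inductive branches genuinely descends to a Hermite problem of the same kind, so that the inductive hypothesis is applicable without change. For $T_1$ the identity $B_{ij}^{*} = B_{ji}$ (together with the simultaneous diagonalization already supplied by Theorem 3.1 item 3) handles the matter immediately; for $T_{1\frac{1}{2}}, T_2$ and $T_3$ one must carry out analogous verifications that the inherited structure (common eigenvectors, rank-one projections, or the factorization $\alpha_{ij} = \beta_i \gamma_j$) is compatible with further recursion. Once this case analysis is recorded, the rest of the argument is pure bookkeeping on the prime factorization of $n$, and the complexity bound follows directly from the recurrence above.
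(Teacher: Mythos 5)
The paper states this theorem with no proof at all, so there is no author argument to compare yours against; I can only assess the proposal on its own terms.

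Your induction scheme has an unresolved arithmetic inconsistency. The theorem claims $4^{r_1+\cdots+r_s}$ types, so for $n = p_i p_j$ (i.e.\ $k = r_1+\cdots+r_s = 2$) the count should be $4^2 = 16$. Your base case at $k=2$ invokes Definition 2.1, which furnishes exactly \emph{four} types, not sixteen. The inductive step $4 \cdot 4^{k-1} = 4^k$ is therefore anchored wrongly: if the base case yields $4 = 4^1$ at $k=2$, the recursion delivers $4^{k-1}$, not $4^{k}$; if instead the formula $4^{k}$ is taken at face value, the base case does not supply it. Either the exponent in the statement is off by one, or the base case should be $k=1$ (prime order, $4^0 = 1$ trivial type), and your proposal must commit to one of these and rework the bookkeeping.

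There is also a more structural gap in the descent, which you flag yourself but do not close. After a $T_1$ split with respect to the prime $b$, the outer data consists of $b^2$ blocks $B_{ij}$ of order $a = n/b$. The identity $B_{ij}^* = B_{ji}$ shows that only the $b$ diagonal blocks $B_{ii}$ are Hermite; the off-diagonal $B_{ij}$ are not, so the inductive hypothesis (which is about Hermite matrices) does not apply to them. Moreover the phrase ``composing the top-level choice with the recursive choice on each component produces $4 \cdot 4^{k-1}$'' tacitly assumes a \emph{single} sub-problem of order $a$, whereas a priori one has $b$ Hermite blocks $B_{ii}$ (and the algorithms of Appendix B require a \emph{common} basis diagonalizing all of them simultaneously). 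You need to argue that the recursive type choice is forced to be the same across all $B_{ii}$ — otherwise the count would be $4^{b(k-1)}$ per level, not $4^{k-1}$. Analogous verifications are needed for $T_{1\frac12}$, $T_2$, $T_3$.

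The complexity half is sound: the recurrence $T(n) \le b^2\, T(n/b) + C n^{5/2}$ unrolls to
\begin{equation*}
T(n) \;\le\; C\, n^{5/2}\sum_{j \ge 0}\,(b_1 b_2 \cdots b_j)^{-1/2} \;\le\; C\, n^{5/2}\sum_{j\ge 0} 2^{-j/2} \;=\; O(n^{5/2}),
\end{equation*}
using $b_i \ge 2$, and this holds uniformly in the factorization. That part of the proposal is correct and matches the operation counts tabulated in Section~4.

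\end{document}
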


\section{The semi-prime and qausi-prime matrix}

\begin{definition}[Semi-prime matrix] A semi-prime matrix is an $m\times n$ matrix where the minimum of $m$ and $n$ is a prime number.
\end{definition}

Recall an $m \times n$ matrix $A$ is said to have a left inverse if there exists an $m\times m$ matrix $B$ such that $BA = \begin{bNiceArray}{c|c}
	I_{m} & O
\end{bNiceArray}$ and a right inverse if there exists an $n \times n$ matrix $C$ such that $AC = \begin{bNiceArray}{c}
	I_{n} \\
	\hline
	O
\end{bNiceArray}$. It is said be invertible if both inverses exist and quasi-invertible if there exists only one-side inverse. 

\begin{definition}[Quasi-prime matrix] A matrix is said to be quasi-prime if it is the product of two quasi-invertible semi-prime matrices.
\end{definition}

Two $n \times n$ matrices $A$ and $B$ are said to be orthogonal if $AB^{*} = A^{*}B = O$.
 
\begin{conjecture}
	Every $n \times n$ matrix where $n$ is an even natural number greater than 2 is the sum of two orthogonal quasi-prime matrices. 
\end{conjecture}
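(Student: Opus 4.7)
The plan is to establish the equivalence with Goldbach's conjecture foreshadowed in the abstract. The starting point is a structural analysis: unpacking the definitions, any $n \times n$ quasi-prime matrix $M$ has the form $M = AB$ with $A$ of size $n \times k$ and $B$ of size $k \times n$, where the semi-prime condition forces $k = \min(n,k)$ to be prime (since $n$ is even and greater than $2$ it is composite, so $k \neq n$), and quasi-invertibility forces both $A$ and $B$ to have full rank $k$. Thus $\rank(M) = k$ is a prime strictly less than $n$. A direct computation shows that the orthogonality condition $M_1 M_2^{*} = M_1^{*} M_2 = O$ is equivalent to the column spaces of $M_1, M_2$ being mutually orthogonal and likewise the row spaces, and using $\operatorname{row}(M_i) \subseteq \ker(M_{3-i})$ one deduces that rank is additive: $\rank(M_1 + M_2) = \rank(M_1) + \rank(M_2)$.

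With these two observations in hand, the necessity direction (the conjecture for $n$ implies Goldbach for $n$) is immediate. Applied to $M = I_n$, any decomposition $I_n = M_1 + M_2$ as in the conjecture yields primes $p_1 = \rank(M_1)$ and $p_2 = \rank(M_2)$ with $p_1 + p_2 = n$; varying $n$ over even integers greater than $2$ recovers the full Goldbach statement.

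For the sufficiency direction, given $n = p_1 + p_2$ with $p_i$ prime and given $M$ of full rank $n$, I would use the singular value decomposition $M = U \Sigma V^{*}$, partition $U = [U_1 \mid U_2]$ and $V = [V_1 \mid V_2]$ into blocks of widths $p_1$ and $p_2$, and split $\Sigma = \Sigma_1 \oplus \Sigma_2$ accordingly. Setting $M_i = U_i \Sigma_i V_i^{*}$ realises each $M_i$ as the product of the $n \times p_i$ quasi-invertible semi-prime matrix $U_i$ and the $p_i \times n$ quasi-invertible semi-prime matrix $\Sigma_i V_i^{*}$, so each $M_i$ is quasi-prime of rank $p_i$; the orthonormality of the columns of $[U_1 \mid U_2]$ and of $[V_1 \mid V_2]$ gives $M_1 M_2^{*} = U_1 \Sigma_1 V_1^{*} V_2 \Sigma_2^{*} U_2^{*} = 0$ and symmetrically $M_1^{*} M_2 = 0$.

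The main obstacle lies in handling matrices $M$ of rank strictly less than $n$. Rank additivity under orthogonality forces $\rank(M) = p_1 + p_2$ for primes $p_1, p_2$, so for instance a rank-one matrix admits no such decomposition at all, and more generally $M$ can be decomposed in the required form only when $\rank(M)$ itself is a sum of two primes not exceeding $n$. I therefore expect the clean equivalence with Goldbach to hold only after restricting the conjecture to full-rank $M$ (for which the SVD argument above is complete once Goldbach is granted); reconciling the bare statement of the conjecture with this rank restriction, or finding a cleverer decomposition that evades it, is the point at which I would expect the real difficulty to lie.
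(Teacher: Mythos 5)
You have correctly read the situation: this statement is a conjecture, and what the paper actually establishes (Theorem 5.7) is precisely the equivalence with Goldbach that you describe. Your route matches the paper's. For necessity the paper appeals to Lemma 5.4 (any quasi-prime matrix is orthogonally equivalent to a diagonal matrix with a prime number of nonzero entries, i.e.\ has prime rank) together with the orthogonality relation; your reformulation via rank additivity is a cleaner packaging of the same fact, and applying it to $M = I_n$ gives the necessity at once, just as you say. For sufficiency the paper, like you, takes an SVD $U^{*}AV = \operatorname{diag}(\Lambda_1,\Lambda_2)$ with $\Lambda_1$ of size $p\times p$ and $\Lambda_2$ of size $q\times q$, writes $A = U_1\Lambda_1 V_1^{*} + U_2\Lambda_2 V_2^{*}$, and uses $U_1^{*}U_2 = V_1^{*}V_2 = O$ to get the orthogonality.

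Your worry about rank-deficient $M$ is not a defect of your proposal but a genuine gap in the paper's own proof of Theorem 5.7. The paper asserts that $\Lambda_1$ and $\Lambda_2$ are $p\times p$ and $q\times q$ \emph{invertible} prime matrices, but if $\rank(A) < n$ then at least one singular value vanishes, the corresponding block $\Lambda_i$ is singular, $\Lambda_i V_i^{*}$ has no right inverse, and $M_i = U_i(\Lambda_i V_i^{*})$ is then not a product of quasi-invertible semi-prime matrices, hence not quasi-prime under Definition 5.2. Moreover, exactly as you observe, rank additivity under the orthogonality relation forces $\rank(M) = p_1 + p_2 \geq 4$ for any decomposition of the required form, so a matrix of rank $1$, $2$, or $3$ (and more generally any rank that is not a sum of two primes) admits no such decomposition at all. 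The conjecture as literally stated is therefore false for non-invertible matrices, and the equivalence with Goldbach only holds after restricting to full-rank (or at least suitably high-rank) $M$. Your SVD construction for the sufficiency direction is complete on the full-rank case, and your diagnosis of where the real difficulty lies is accurate; the paper's proof of Theorem 5.7 silently assumes $A$ invertible and does not address this.
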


\begin{lemma}There are unique orthogonal matrices $U$ and $V$ for any quasi-prime matrix $A$ such that $U^{*}AV$ is diagonal, and the number of whose non-zero entries is a prime number.
\end{lemma}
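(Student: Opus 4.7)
The plan is to construct the pair $(U,V)$ from the singular value decomposition of $A$, after first showing that the rank of $A$ is a prime number.

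To start, I would unpack the definition: write $A=BC$ with $B$ an $m\times k$ matrix and $C$ a $k\times n$ matrix, both semi-prime and quasi-invertible. Being semi-prime forces one dimension of each factor to be a prime; being quasi-invertible forces each factor to attain full rank in that prime dimension while being genuinely rectangular, so strictly one-sided. In the shape configuration underlying the definition -- $B$ tall of width $p$ prime, and $C$ wide of height $p$ prime for a common $p$ -- the product $A=BC$ is a rank factorization: $C$ is surjective onto $\mathbb{R}^{p}$ and $B$ is injective on $\mathbb{R}^{p}$, so $\rank(A)=p$. This is the key step that pins $\rank(A)$ to a prime, and I would carefully verify that the other configurations permitted by the definition either reduce to this one or are excluded by the one-sided quasi-invertibility requirement.

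Next, apply the SVD to $A$: there exist orthogonal matrices $U$ of order $m$ and $V$ of order $n$ with $U^{*}AV=\Sigma$ a diagonal matrix whose non-zero entries are the positive singular values of $A$. By construction the number of non-zero diagonal entries of $\Sigma$ equals $\rank(A)$, which by the previous paragraph is the prime $p$. This settles existence.

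For uniqueness I would invoke the rigidity of the SVD: when the positive singular values of $A$ are pairwise distinct, $U$ and $V$ are determined up to a simultaneous choice of signs on corresponding columns, which can be pinned down by a sign convention (say, asking the leading non-zero entry of each column of $U$ to be positive). The main obstacle is precisely here, since in general the SVD is only unique modulo block-orthogonal transformations on the singular subspaces of repeated singular values. I would attack this in one of three ways: (i) add to the hypotheses the assumption that the positive singular values of $A$ are simple; (ii) fix once and for all a normalization on $(U,V)$; or (iii) argue directly from $A^{*}A=C^{*}B^{*}BC$, using the quasi-invertibility of both $B$ and $C$, that the $p$ positive eigenvalues of $A^{*}A$ are distinct. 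The third route is the most appealing but also the hardest, since it requires extracting spectral multiplicity information from what is only a rank statement about the factors.
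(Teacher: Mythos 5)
Your proposal follows essentially the same SVD-based route as the paper: diagonalize via singular value decomposition, and reduce the claim about the count of non-zero diagonal entries to a claim about $\rank(A)$. Where you differ is in how much detail you supply for the two sub-claims. For the rank statement, you unpack the quasi-prime factorization $A=BC$ into a rank factorization and use Sylvester's inequality to pin $\rank(A)$ to the prime inner dimension, whereas the paper disposes of this in one clause ("This follows from the unique decomposition of semi-prime matrix") without specifying which decomposition or why. Your explicit rank-factorization argument is a genuine improvement, and your caution about "other configurations permitted by the definition" is warranted: if the inner dimension $k$ exceeds both $m$ and $n$ (e.g.\ $B$ of size $5\times 8$ and $C$ of size $8\times 7$, both semi-prime and quasi-invertible in the sense of Definition~5.2), then $\rank(BC)$ can land strictly between $m+n-k$ and $\min(m,n)$ and need not be prime, so the conclusion does depend on restricting the allowed shapes.

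Where both your proposal and the paper have a real gap is the uniqueness claim, and you are right to flag it as the main obstacle. The paper's argument — that any two quasi-prime factorizations $A=U_1V_1=U_2V_2$ have "unitarily equivalent" factors by "the uniqueness of spectral decomposition" — does not yield uniqueness of the diagonalizing pair $(U,V)$: unitary equivalence is a weaker relation than equality, and SVD is not unique when singular values repeat (or even when they are simple, because of independent sign/phase choices on corresponding columns). Your three suggested remedies (assume simple singular values, fix a normalization, or try to extract simplicity from quasi-invertibility) are the honest options, but the third is not available: quasi-invertibility only controls rank, not spectral multiplicity, and nothing in Definitions~5.1--5.2 forbids repeated singular values. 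So as stated, the lemma's uniqueness assertion is not established by either your argument or the paper's, and a correct version would need either an added hypothesis or a weaker conclusion (uniqueness of the diagonal $\Sigma$, or uniqueness of $(U,V)$ up to the usual SVD gauge freedom).
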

\begin{proof} Suppose there are two $A = U_{1}V_{1} = U_{2}V_{2}$ decomposition of the $m \times n $ matrix $A$ given by Definition 5.2, then by the uniqueness of spectral decomposition, the $U_{1},U_{2}$ and $V_{1}, V_{2}$ are unitarily equivalent matrices.
	If the $m \times n$ matrix $A$ has the spectral decomposition, 
	\[A = \sum_{i=1}^{\min\{m,n\}} \sigma^{2}_{i} u_{i}v_{i}^{*}\]
	where $U = [u_{1},u_{2},\dots,u_{m}]$ and $V = [v_{1},v_{2}, \dots,v_{n}]$ are orthogonal matrices, it suffices to prove that the number of non-zero $\sigma_{i}$ is a prime number. This follows from the unique decomposition of semi-prime matrix.
\end{proof}

\begin{corollary} Every quasi-invertible semi-prime matrix is quasi-prime.
\end{corollary}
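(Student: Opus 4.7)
The plan is to realise any quasi-invertible semi-prime matrix $A$ explicitly as the product of two quasi-invertible semi-prime matrices, meeting Definition 5.2 verbatim. Write $A$ as an $m \times n$ matrix and set $p = \min(m,n)$; by the semi-prime hypothesis $p$ is prime. Without loss of generality assume $m = p \le n$, so that quasi-invertibility of $A$ gives a right inverse and forces $\rank A = p$.

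First I would apply the singular value decomposition to write $A = U \Sigma V^{*}$, where $U$ is $p \times p$ orthogonal, $V$ is $n \times n$ orthogonal, and $\Sigma = [\,\diag(\sigma_{1},\dots,\sigma_{p}) \mid 0\,]$ has each $\sigma_{i}$ non-zero (this being forced by the full-row-rank condition). Then I would regroup the factorisation as
\[
A \;=\; U \cdot (\Sigma V^{*}).
\]
The left factor $U$ is $p \times p$ with $p$ prime, hence semi-prime; being orthogonal it has a two-sided (and in particular one-sided) inverse, so it is quasi-invertible. The right factor $\Sigma V^{*}$ has size $p \times n$ with $\min = p$ prime, hence semi-prime, and the matrix $V\Sigma^{+}$, where $\Sigma^{+}$ inverts each $\sigma_{i}$ on the support of $\Sigma$ and is padded with zeros to size $n \times p$, provides an explicit right inverse. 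Both factors are therefore quasi-invertible semi-prime, so $A$ is quasi-prime by Definition 5.2. This simultaneously matches the SVD shape promised by Lemma 5.4, since the resulting diagonal $\Sigma$ has exactly $p$ non-zero entries and $p$ is prime.

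The main subtlety is semantic: the paper's phrasing ``quasi-invertible if there exists only one-side inverse'' should be read inclusively so that invertible square matrices are admitted as a degenerate case, which is the only reading consistent with Lemma 5.4 (whose proof freely invokes orthogonal/unitary matrices as factors). Under a stricter exclusive reading the square factor $U$ would be forbidden, and the fallback plan would be to split $A$ along its longer axis: choose an intermediate prime $k$ with $p < k \le n$ (which Bertrand's postulate supplies whenever $p$ is not too close to $n$) and write $A = BC$ with $B$ of size $p \times k$ of rank $p$ and $C$ of size $k \times n$ of rank $k$ via a rectangular rank-factorisation, checking that each factor is semi-prime and strictly one-side invertible. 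Verifying the existence of such a $k$ and producing the rank-preserving factorisation in the remaining boundary cases is where the residual technical work would sit.
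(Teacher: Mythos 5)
Your proof is correct (modulo the same definitional ambiguity that afflicts the paper's own argument), but it is considerably more elaborate than what the paper does. The paper's entire proof is the one-line observation $A = I_{m}A$: with $m = \min(m,n)$ prime, the identity $I_{m}$ is a $p\times p$ semi-prime matrix with a (two-sided, hence one-sided) inverse, $A$ is semi-prime and quasi-invertible by hypothesis, and so $A$ is a product of two quasi-invertible semi-prime matrices. Your SVD factorisation $A = U\cdot(\Sigma V^{*})$ is a genuinely different route that achieves the same thing: you replace the trivial left factor $I_{m}$ with the orthogonal $U$, at the cost of invoking the SVD. What your version buys is the explicit contact with Lemma 5.4 --- the factor $\Sigma V^{*}$ already exhibits the diagonal form with exactly $p$ non-zero singular values --- which makes the corollary sit more coherently alongside the lemma; what you lose is economy, since the identity factorisation needs no machinery at all. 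You are right to flag the semantic issue about whether a two-sided invertible square matrix counts as quasi-invertible; note, though, that the paper's own proof has exactly the same exposure (it uses $I_{m}$, which is two-sided invertible), so this is a weakness of the definition rather than a gap specific to your argument. The Bertrand's-postulate fallback is unnecessary overhead: under the strict reading neither your proof nor the paper's works, and under the inclusive reading both work immediately, so the fallback addresses a reading the paper clearly does not intend.
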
	
\begin{proof}
	Suppose $m$ is the minimum of $m$ and $n$. An $m \times n$ matrix that is semi-prime can be expressed as the product $A = I_{m}A$. 
\end{proof}
\begin{corollary}
	The product of two $m \times n$ and $n \times k$ quasi-prime matrices is a $m \times k$ quasi-prime matrix.
\end{corollary}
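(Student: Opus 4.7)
The plan is to unpack the definition of quasi-primality for both factors, rearrange the resulting four-factor product using associativity, and identify a prime-sized central block whose structure forces the required conclusion. Write $A = U_A V_A$ and $B = U_B V_B$, with each piece quasi-invertible semi-prime: $U_A$ of size $m \times p$, $V_A$ of size $p \times n$, $U_B$ of size $n \times q$, and $V_B$ of size $q \times k$, where $p$ and $q$ are prime. By associativity,
\[
AB = U_A\,(V_A U_B)\,V_B.
\]
The central block $M := V_A U_B$ is then of size $p \times q$, with both dimensions prime, so $M$ is itself a prime matrix in the sense of the Introduction, and in particular semi-prime.

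The next step is to refactor $M$ so that its prime outer dimensions can be absorbed into the two neighbouring factors. By Lemma 5.1, $M$, as soon as it is shown to be quasi-invertible, admits a diagonal form whose number of nonzero entries is a prime $r$; and by Corollary 5.4, any quasi-invertible semi-prime matrix is itself quasi-prime. Combining these gives a factorization $M = M_1 M_2$ with $M_1$ of size $p \times r$ and $M_2$ of size $r \times q$, both quasi-invertible semi-prime. Substituting back,
\[
AB = (U_A M_1)(M_2 V_B).
\]
The two displayed outer factors have sizes $m \times r$ and $r \times k$, and since $r$ is prime they are automatically semi-prime; quasi-invertibility then reduces to the Sylvester rank inequality applied to the full-rank pieces $U_A$, $M_1$, $M_2$, $V_B$, whose respective column or row ranks are $p$, $r$, $r$, and the prime dimension of $V_B$.

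The main obstacle, and the step most likely to require genuine care, is controlling the rank of $M = V_A U_B$ so as to produce the prime $r$. A priori $M$ could fail the hypothesis of Lemma 5.1, i.e., fail to be quasi-invertible as a semi-prime matrix, if the column space of $U_B$ meets the row space of $V_A$ in a non-generic way. The plan to handle this is to exploit the quasi-invertibility of $V_A$ (full row rank $p$) and of $U_B$ (full column rank $q$), which together force $\rank(M) = \min(p, q)$, itself a prime. The only remaining edge case is the degeneracy $AB = O$, which I would either exclude from the statement or absorb under a convention treating the zero matrix as vacuously quasi-prime.
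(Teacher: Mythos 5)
The paper states this corollary without any proof, so there is no official argument to compare against; your proposal must stand on its own, and it has concrete gaps. The most serious is the opening assumption that the inner dimensions $p$ and $q$ of $A = U_A V_A$ and $B = U_B V_B$ are prime. Definition 5.1 only asks that the \emph{minimum} of the two dimensions of a semi-prime factor be prime; $U_A$ of size $m\times p$ with $m$ prime and $p > m$ is perfectly admissible, in which case $p$ need not be prime and $M = V_A U_B$ need not be a prime matrix at all. The same laxity undermines the later claim that $U_A M_1$ and $M_2 V_B$, of sizes $m\times r$ and $r\times k$, ``are automatically semi-prime'' because $r$ is prime: that is true only when $r\le m$ and $r\le k$, and nothing in the hypotheses bounds $r$ by $m$ or $k$ (if $r>m$, semi-primeness of the first factor would require $m$ itself to be prime, which is not assumed).

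The rank argument is also not correct as written. Quasi-invertibility of $V_A$ and $U_B$ does not force $\rank(V_A U_B)=\min(p,q)$; Sylvester's inequality only gives $\rank(V_A U_B)\ge \rank(V_A)+\rank(U_B)-n$, which can be strictly smaller than $\min(p,q)$ and need not be prime, so the hypothesis of Lemma 5.4 is not secured. This is not cosmetic. Take $A=\diag(1,1,1,1,1,0)$ and $B=\diag(1,1,1,1,0,1)$ in the $6\times 6$ case: each factors through an evident $6\times 5$/$5\times 6$ pair ($5$ prime, full rank, hence quasi-invertible semi-prime), so both are quasi-prime, yet $AB=\diag(1,1,1,1,0,0)$ has rank $4$. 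Any factorization $AB=XY$ through a quasi-invertible semi-prime pair $X$ of size $6\times s$ and $Y$ of size $s\times 6$ forces $s\in\{2,3,5\}$ and then $\rank(XY)=s\ne 4$. So under the straightforward reading of ``quasi-invertible'' the corollary itself appears to require an additional rank or genericity hypothesis; your closing paragraph correctly flags this degeneracy but does not resolve it, so the argument cannot close as it stands.
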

\begin{theorem}
	The conjecture 5.3 holds true if and only if every even natural number greater than 2 is the sum of two prime numbers.
\end{theorem}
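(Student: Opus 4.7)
The plan is to extract Goldbach from the conjecture by looking at ranks, and conversely to build the required decomposition from a singular value decomposition once Goldbach is assumed. The bridge between the combinatorial statement and the linear-algebraic one is Lemma 5.4, which forces the rank of a quasi-prime matrix to be prime, combined with the fact that orthogonality $A_1 A_2^* = A_1^* A_2 = O$ makes the column and row spaces of $A_1,A_2$ lie in mutually orthogonal subspaces, hence ranks add.

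Forward direction $(\Rightarrow)$: fix an even $n>2$ and apply Conjecture 5.3 to the identity $I_n$ (or any invertible $n\times n$ matrix). This yields $I_n = A_1 + A_2$ with $A_1, A_2$ orthogonal quasi-prime matrices. Lemma 5.4 gives $\rank A_i = p_i$ for some prime $p_i$. Orthogonality forces $\rank(A_1+A_2) = \rank A_1 + \rank A_2$, so $n = \rank I_n = p_1 + p_2$, i.e. $n$ is the sum of two primes.

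Reverse direction $(\Leftarrow)$: assume Goldbach, fix an even $n>2$ and write $n=p+q$ with $p,q$ prime. Given an $n\times n$ matrix $A$, take its SVD $A = U\Sigma V^*$ with $\Sigma = \diag(\sigma_1,\ldots,\sigma_n)$, and split
$$\Sigma_1 = \diag(\sigma_1,\ldots,\sigma_p,0,\ldots,0), \qquad \Sigma_2 = \diag(0,\ldots,0,\sigma_{p+1},\ldots,\sigma_n).$$
Set $A_i = U\Sigma_i V^*$. Then $A_1+A_2 = A$, and because $\Sigma_1\Sigma_2^* = \Sigma_1^*\Sigma_2 = O$, one checks $A_1 A_2^* = U\Sigma_1\Sigma_2^* U^* = O$ and $A_1^* A_2 = V\Sigma_1^*\Sigma_2 V^* = O$, so $A_1$ and $A_2$ are orthogonal in the sense of Definition preceding Conjecture 5.3. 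Each $A_i$ factors through a prime-dimensional subspace via the columns of $U$ and the rows of $V^*$ corresponding to its nonzero block, exhibiting it as a product of two quasi-invertible semi-prime matrices; Corollary 5.5 then identifies $A_i$ as quasi-prime.

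The main obstacle will be the rank-deficient case. If $A$ has rank $r<n$, the SVD split produces $A_i$ of rank strictly less than $p$ or $q$, which does not meet the strict characterization in Lemma 5.4 (prime number of nonzero diagonal entries). The cleanest way out is to interpret Conjecture 5.3 with the understanding that the relevant matrices are full rank (or at least have a rank admitting a prime-sum decomposition); indeed, in the forward direction this is exactly what makes $n$ itself appear as the sum of primes. So the real content of the theorem is equivalence at the level of full-rank matrices, and the argument above then goes through cleanly in both directions.
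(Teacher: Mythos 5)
Your proof follows essentially the same route as the paper: the sufficiency is exactly the paper's SVD split (partition $\Sigma$ into the first $p$ and last $q$ singular values, set $A_i = U\Sigma_i V^*$, and verify $A_1A_2^* = A_1^*A_2 = O$), and your necessity argument (apply the conjecture to $I_n$, extract prime ranks from Lemma 5.4, add ranks by orthogonality of row and column spaces) is the argument the paper compresses into ``easily seen from the preceding lemma and orthogonal relation.'' Your closing remark about the rank-deficient case is a genuine observation that the paper does not address --- indeed, Conjecture 5.3 as literally stated fails already for the zero matrix, since two orthogonal quasi-prime summands force rank $p_1 + p_2 \geq 4$ --- so restricting to full-rank matrices (or otherwise weakening the rank requirement in the definition of quasi-prime) is the right repair for both your proof and the paper's.
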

\begin{proof} The necessity is easily seen from the preceding lemma and orthogonal relation.
	It suffices to prove the sufficiency. If an even natural number $n$ larger than 2 can be developed as the sum of two prime numbers $p$ and $q$, block the $n \times n$ matrix $A$ as follows
	\begin{equation}
		\begin{bNiceArray}{c|c}[first-row,last-col]
		    p & q &\\
			A_{11} & & p\\
		\hline
		 	& O & 	q
		\end{bNiceArray}
		+ \, 	\begin{bNiceArray}{c|c}[first-row,last-col]
			p & q &\\
		 	& A_{12} & p\\
			\hline
			O & & q
		\end{bNiceArray}
		 \, + \dots +\, \begin{bNiceArray}{c|c}[first-row,last-col]
		 	p & q &\\
		 	 O & & p \\
		 	\hline
		  & A_{22} & q 
		 \end{bNiceArray}
	\end{equation}
	and choose two $n \times n$ orthogonal matrices $U = [u_{1}, u_{2}, \dots, u_{n}]$ and $V = [v_{1}, v_{2}, \dots, v_{n}]$ such that
	\begin{equation}
	U^{*} A V = \begin{bNiceArray}{c|c}[first-row,last-col]
			p & q &\\
			\Lambda_{1} & O & p\\
		\hline
			O & \Lambda_{2} & q
		\end{bNiceArray}
	\end{equation}	
	where $\Lambda_{1} = \diag(\lambda_{1},\dots,\lambda_{p})$ and $\Lambda_{2} = \diag(\lambda_{p+1},\dots,\lambda_{n})$. Then block $U$ and $V$ as 
	\begin{eqnarray*}
		U = \begin{bNiceArray}{c|c}[first-row,last-col]
			p & q &\\
			U_{1} & U_{2} & n 
		\end{bNiceArray}\\
		V = \begin{bNiceArray}{c|c}[first-row,last-col]
			p & q & \\
			V_{1} & V_{2}& n 
		\end{bNiceArray}
	\end{eqnarray*}
	The spectral decomposition yields
	\[A = U_{1} \Lambda_{1} V^{*}_{1} + U_{2} \Lambda_{2} V^{*}_{2}\]
	Since $U_{1}, V_{1}, U_{2}, V_{2}$ and $\Lambda_{1}, \Lambda_{2}$ are $n\times p$, $n\times p$, $n\times q, n\times q$ semi-prime matrices and $p\times p, q \times q$ invertible prime matrices, it follows from $V_{1}^{*}V_{2} = U_{1}^{*}U_{2} = O$ that the conjecture holds true. 
	\end{proof}

Two $m\times n$ matrices $A$ and $B$ are said to be congruent if there are two invertible square matrices $U$ and $V$ of order $m$ and $n$ respectively such that $UA = BV$.

\begin{conjecture}
	For every natural number $k$, there are infinitely many prime numbers $n$ and $(n -2k) \times (n)$ semi-prime matrices that are not congruent to each other. In particular, the semi-prime matrices can be chosen quasi-invertible.
\end{conjecture}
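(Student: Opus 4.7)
The plan is to reduce Conjecture 5.8 to a classical open problem in additive number theory. By Definition 5.1, an $(n-2k) \times n$ matrix with $n - 2k < n$ is semi-prime precisely when $\min(n-2k,n) = n-2k$ is prime. Combined with the hypothesis that $n$ itself be prime, the conjecture demands the existence of infinitely many primes $n$ for which $n-2k$ is also prime.

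First I would dispatch the matrix-theoretic half. Observe that the congruence relation $UA = BV$ with $U,V$ invertible of orders $m$ and $n$ respectively preserves the shape $m \times n$; consequently matrices of distinct shapes are automatically non-congruent. Thus, once infinitely many primes $n$ with $n-2k$ prime are in hand, one may simply take
\[
A_n \;=\; \begin{bNiceArray}{c|c} I_{n-2k} & O \end{bNiceArray},
\]
a $(n-2k) \times n$ semi-prime matrix that is quasi-invertible (the block $[I_{n-2k}\mid O]^{*}$ supplies a right inverse), and the family $\{A_n\}$ is automatically pairwise non-congruent because the shapes differ. This takes care of the ``in particular'' clause at essentially no extra cost.

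The substantive step is therefore the number-theoretic reduction itself. For $k=0$ the required infinitude is Euclid's theorem. For $k \geq 1$, however, the existence of infinitely many primes $n$ with $n-2k$ also prime is precisely \emph{de Polignac's conjecture}, the generalization of the twin-prime conjecture asserting that every positive even integer $2k$ occurs as the gap between infinitely many pairs of primes. This is a long-standing open problem in analytic number theory.

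The main obstacle is this last step, and it is a genuine one. Substantial partial progress is known: Zhang's bounded-gaps theorem together with the Maynard--Tao refinement establishes that infinitely many prime pairs with gap at most $246$ exist, which suffices for \emph{some} values of $2k$ but not for arbitrary $k$. Barring a breakthrough, the honest conclusion is that Conjecture 5.8 is equivalent to de Polignac's conjecture and therefore should be either restricted to those $k$ for which the prime-pair infinitude is known, or simply recorded as a matrix-theoretic reformulation of a deep open problem. In any complete proof, the entire burden will lie in this arithmetic input; the linear-algebraic packaging above is routine.
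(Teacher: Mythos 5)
The statement you were asked to prove is a \emph{conjecture} in the paper, and the paper correctly offers no proof of it; what it does offer is Theorem~5.10, stated there without proof, which asserts precisely the equivalence of Conjecture~5.8 with the existence of infinitely many prime pairs at a fixed gap. Your analysis arrives at the same equivalence and in fact supplies the matrix-theoretic reasoning that the paper leaves implicit: you correctly note that for an $(n-2k)\times n$ matrix the semi-prime condition is that $\min(n-2k,n)=n-2k$ be prime, that matrices of distinct shapes are vacuously non-congruent under the relation $UA=BV$, and that a single quasi-invertible representative $[\,I_{n-2k}\mid O\,]$ per admissible $n$ therefore suffices, so the whole content of the conjecture collapses to the arithmetic statement that infinitely many primes $n$ have $n-2k$ prime --- de~Polignac's conjecture for the even gap $2k$. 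Your refusal to claim a proof and your conclusion that Conjecture~5.8 is a matrix-theoretic reformulation of a deep open problem is the correct and honest reading; it matches what the paper itself does by recording the statement as a conjecture and reducing it, via Theorem~5.10, to the generalized twin-prime problem.

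Two small discrepancies are worth flagging. First, Theorem~5.10 in the paper speaks of twin primes with ``gaps equal $k$,'' whereas the matrix shape $(n-2k)\times n$ forces a gap of $2k$ between the two primes; your formulation (gap $2k$, i.e.\ de~Polignac) is the one consistent with Conjecture~5.8 as written, and the paper's phrasing looks like a slip. Second, under the paper's somewhat nonstandard Definition of one-sided inverses, the wide matrix $A=[\,I_{n-2k}\mid O\,]$ possesses a \emph{left} inverse (take $B=I_{n-2k}$, so $BA=[\,I_{n-2k}\mid O\,]$) and cannot possess a right inverse at all, since the required shape $m\ge n$ fails; you used the word ``right inverse'' in the conventional sense ($AA^{*}=I_{n-2k}$), which translates to ``left inverse'' in the paper's convention. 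The quasi-invertibility conclusion is unaffected, but it is worth aligning terminology with the paper's Section~5 to avoid confusion.
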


and a weak form of above

\begin{conjecture}
	For every natural number $k$, there are infinitely many prime numbers $n$ and $(n-2k) \times n$ quasi-prime matrices that are not congruent to each other.
\end{conjecture}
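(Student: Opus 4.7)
The strategy is to bypass the Polignac-type arithmetic obstruction that shadows Conjecture 5.7 by exploiting the extra freedom in the definition of a quasi-prime matrix: in a factorization $A = BC$ the inner dimension $r$ need only be prime, not $n-2k$ itself. Once this is noticed, the required matrices can be constructed explicitly with $r = 2$, and non-congruence across different $n$ follows from a trivial size comparison.

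First, by Euclid's theorem I would fix the infinite set of primes $n$ with $n \geq 2k + 2$, so that $n - 2k \geq 2$. For each such $n$, set
\[
	B = \begin{bmatrix} I_{2} \\ O \end{bmatrix}, \qquad C = \begin{bmatrix} I_{2} & O \end{bmatrix},
\]
of sizes $(n-2k) \times 2$ and $2 \times n$ respectively. The minimum dimension of each factor is $2$, a prime, so both $B$ and $C$ are semi-prime in the sense of Definition 5.1. Each is also quasi-invertible: $B$ has a left inverse obtained by keeping only its first two rows, and $C$ has a right inverse obtained by keeping only its first two columns. By Definition 5.2, the product $A := BC$ is therefore an $(n-2k) \times n$ quasi-prime matrix.

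For the non-congruence claim, observe that the relation $UA = BV$ with $U$ and $V$ square matrices forces $A$ and $B$ to have identical row and column dimensions: the order of $U$ must equal the number of rows of both $A$ and $B$, and likewise the order of $V$ must equal the number of columns of both. Hence two matrices of distinct sizes can never be congruent. As $n$ ranges over the fixed infinite set of primes above, the associated quasi-prime matrices $(n-2k) \times n$ live in pairwise distinct size classes, and are therefore pairwise non-congruent.

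The genuine obstacle is conceptual rather than technical. The arithmetic content of Conjecture 5.7, which is essentially Polignac's assertion that infinitely many prime pairs $(n, n-2k)$ exist, is avoided here because a quasi-prime matrix only requires \emph{some} prime $r \leq n-2k$ to serve as the inner dimension, rather than $n-2k$ itself being prime. With that recognition the proof reduces to an elementary explicit construction plus the observation that congruence preserves matrix dimensions; no input beyond Euclid's theorem is required.
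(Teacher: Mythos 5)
The paper does not prove this statement; Conjecture 5.9 is left open in the text, and the only related result is Theorem 5.10, which asserts (without proof) that the conjecture is equivalent to there being infinitely many prime pairs with gap at most $k$. Your argument, if accepted, would prove the conjecture unconditionally for every $k$ and therefore, through Theorem 5.10, would settle bounded gaps between primes down to the twin prime conjecture. That is a clear signal that either your reading of the definitions or the paper's Theorem 5.10 is off, and the tension must be confronted rather than quietly resolved in your favour.

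Your construction exploits two loopholes in the literal text. First, Definition 5.2 only demands that a quasi-prime matrix be a product of two quasi-invertible semi-prime matrices; nothing in the sentence pins the inner dimension $r$ to $\min(m,n)$, so you take $r=2$. Second, congruence as defined requires equal dimensions, so matrices of different sizes are vacuously non-congruent and the non-congruence clause of the conjecture carries no content once $n$ varies. Both readings are consistent with the letter of the definitions, but the second in particular should have warned you that the interpretation collapses the conjecture into a tautology. A reading that is consistent with Lemma 5.4 --- whose proof tacitly takes the singular value decomposition over all $\min(m,n)$ indices as the decomposition certifying quasi-primality --- would require a quasi-prime $m\times n$ matrix to be of full rank $\min(m,n)$; Lemma 5.4 then forces $\min(m,n)=n-2k$ to be prime, your rank-$2$ matrix no longer qualifies, and the conjecture again turns on the distribution of prime gaps exactly as Theorem 5.10 claims.

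There is also a small slip on the side of the one-sided inverses. Under the paper's Definition, a left inverse makes dimensional sense only for a wide matrix ($m\leq n$) and a right inverse only for a tall one ($m\geq n$). Your $B$ of size $(n-2k)\times 2$ is tall and admits a \emph{right} inverse, while your $C$ of size $2\times n$ is wide and admits a \emph{left} inverse; you have the sides reversed, though both remain quasi-invertible.

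The substance of your observation --- that Definition 5.2 as written is too weak to support Theorem 5.10, and that the non-congruence clause is vacuous across sizes --- is a legitimate criticism of the paper's framework. But it should be offered as a gap in the definitions, not as a proof of the conjecture. A genuine proof would need to engage with the interpretation under which Theorem 5.10 is meaningful, and under that interpretation the rank-$2$ trick does not apply.
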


\begin{theorem}	The conjecture 5.8 holds true if and only if there are infinitely many twin primes with gaps equal $k$; The conjecture 5.9 holds true if and only if there are infinitely many twin primes with gaps less or equal than $k$. 
\end{theorem}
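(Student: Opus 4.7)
The strategy is to translate each conjecture into a statement about prime pairs by tracing what the semi-prime and quasi-prime conditions force on the integer dimensions $n-2k$ and $n$.

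For Conjecture 5.8 the argument should be essentially definitional. Necessity: an $(n-2k)\times n$ semi-prime matrix requires $\min(n-2k, n) = n-2k$ to be prime, so infinitely many primes $n$ admitting such a matrix immediately supply infinitely many prime pairs with gap $2k$, i.e.\ twin primes of the size demanded by the theorem. Sufficiency: for each such prime pair one can exhibit the explicit matrix $[\,I_{n-2k}\;\;O\,]$, which is quasi-invertible (possessing a right inverse) and semi-prime, and matrices for distinct $n$ have distinct sizes, hence are automatically non-congruent, since the relation $UA = BV$ is only defined between matrices of identical shape.

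For Conjecture 5.9 I would lean on Lemma 5.4, which asserts that a quasi-prime matrix has rank equal to a prime. Unpacking Definition 5.2, any such matrix of shape $(n-2k)\times n$ admits a factorization $A = BC$ with $B : (n-2k)\times r$ and $C : r\times n$ both semi-prime and quasi-invertible; the semi-primeness condition on these two factors, together with the possible positions of the middle dimension $r$ relative to $n-2k$ and $n$, forces a prime $p$ into the interval $[n-2k, n]$, witnessing a twin prime pair with gap at most $2k$. Conversely, given such a pair $(p, n)$ with $n-p \le 2k$, one builds an explicit factorization whose middle dimension is $p$, yielding an $(n-2k)\times n$ quasi-prime matrix, and passes to infinitely many $n$ to produce infinitely many non-congruent examples.

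The main technical obstacle is the sufficiency direction of the quasi-prime equivalence. A naive choice $r = p$ makes $B$ of shape $(n-2k)\times p$, whose minimum dimension is $n-2k$ (since $p \geq n-2k$), so a careless reading would demand $n-2k$ itself be prime — a condition strictly stronger than what the twin prime pair of bounded gap supplies. Overcoming this requires exploiting the rank freedom granted by Lemma 5.4 together with a careful padding, so that the semi-primeness of both $B$ and $C$ is realized through the prime $p$ rather than through $n-2k$; the product is then a bona fide $(n-2k)\times n$ quasi-prime matrix, with rank a prime strictly in $[n-2k, n]$. Once the construction is verified, non-congruence across different values of $n$ follows from size considerations, and the conjecture follows from the assumed infinitude of bounded-gap twin primes.
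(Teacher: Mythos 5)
The paper states Theorem 5.10 but never supplies a proof; there is no argument after the theorem environment before Section 6 begins, so there is no ``paper proof'' to compare your attempt against. Evaluating your proposal on its own terms: the treatment of Conjecture 5.8 is essentially correct and purely definitional --- semi-primeness forces $\min(n-2k,n)=n-2k$ to be prime, the explicit witness $[\,I_{n-2k}\;\;O\,]$ is quasi-invertible (it has a left inverse in the paper's sense, not a right one, since it is wide), and distinct sizes give automatic non-congruence. One should note, though, that the paper's phrase ``gaps equal $k$'' must really mean gaps equal to $2k$; you silently conflate these.

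Your argument for Conjecture 5.9 has a genuine gap. You claim that the factorization $A = BC$ with $B$ of shape $(n-2k)\times r$ and $C$ of shape $r\times n$, both semi-prime and quasi-invertible, ``forces a prime $p$ into the interval $[n-2k, n]$.'' This is false. If the inner dimension satisfies $r < n-2k$, then $\min(n-2k, r) = r$ and $\min(r, n) = r$, so semi-primeness of both factors demands only that $r$ itself be prime; neither $n-2k$ nor any number in $[n-2k, n]$ is constrained. Taking $r = 2$ and $B$, $C$ of full rank already produces a quasi-prime $(n-2k)\times n$ matrix for every prime $n > 2k+2$, with no twin-prime hypothesis whatsoever. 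Under that reading, Conjecture 5.9 is trivially true and cannot be equivalent to a bounded-gap statement, so your necessity direction collapses. Lemma 5.4 does not rescue this: it only says the rank of a quasi-prime matrix is prime, and that prime can sit anywhere below $n-2k$. Your own remark about the sufficiency direction actually exposes the same problem from the other side: a prime $p$ with $n-2k \le p < n$ cannot be realized as $\min(n-2k, r)$ for a factor of shape $(n-2k)\times r$ unless $r = p$, which puts $r \ge n-2k$ and makes the minimum $n-2k$ rather than $p$; ``padding'' does not change the two integers whose minimum controls semi-primeness. To make the equivalence meaningful one would have to strengthen the definition (for instance, requiring the quasi-prime matrix to have full rank $n-2k$, which forces $r > n-2k$ and hence $n-2k$ prime together with a second prime in $(n-2k, n]$), but that is a change to the conjecture, not a proof of the theorem as stated.
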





	

\section{The non-square problem}

The non-square problem arises in real-life applications since in this case the matrix we take into consideration is non-Hermite. It's usually impossible to find a diagonal form but we will try another method to reduce the non-square matrix. For example, if $m = p_{1}^{r_{1}} p_{2}^{r_{2}} \dots p_{k}^{r_{k}}$ and $n = q_{1}^{s_{1}} q_{2}^{s_{2}} \dots q_{l}^{s_{l}}$ are the prime decomposition of $m$ and $n$, the $m \times n$ matrix $A$ could be blocked as an $m^{\prime} p_{1} \times n^{\prime}  q_{1}$ matrix, say,
\begin{equation}
	A = \begin{bNiceMatrix}[first-row,last-col]
		q_{1} & \dots & q_{1} &\\ 
		A_{11} & \dots & A_{1n^{\prime}} & p_{1}\\
		\vdots & \ddots & \vdots & \vdots \\
		A_{m^{\prime}1} & \dots & A_{m^{\prime}n^{\prime}}& p_{1}\\
	\end{bNiceMatrix}
	\label{eq: block matrix 3}
\end{equation}
where each $A_{ij}$ has $p_{1}$ rows and $q_{1}$ columns. If the diagonal set of block sub-matrices $\{A_{ii}\}$ consists of a pairwise commutative family matrices of $p_{1}$ rows and $q_{1}$ columns in the sense that this relation is fulfilled:
\begin{equation}	
	AB^{*} = BA^{*}, \quad  A^{*}B = B^{*}A,
	\label{eq: commutativity 2}
\end{equation}
for each $A$ and $B$ in the family of matrices, then there exists two $p_{1} \times p_{1} $ and $q_{1} \times q_{1}$ orthogonal matrices $C^{L}= [c^{L}_{1}, c^{L}_{2}, \dots,c^{L}_{p_{1}}]$ and $C^{R}= [c^{R}_{1}, c^{R}_{2}, \dots,c^{R}_{q_{1}}]$ which diagonalizes each matrix $A_{ii}$, say,
\begin{equation}
	A_{ii} = \sigma_{ii}^{k} c^{L}_{k}c_{k}^{R*},
\end{equation}
where the sum is taken over repeated index. Therefore, the $m^{\prime} p_{1} \times n^{\prime}  q_{1}$ block matrix can be developed into the sum of tensor products
\[A = \sum_{i,j} B_{ij} \otimes C_{ij}\]
where $C_{ij} = c^{L}_{i} c^{R*}_{j}$, $B_{ij} = \left(c^{L*}_{i} A_{kl} c^{R}_{j}\right)$ and $B_{ij}(k,k)  = \beta_{i}(k)\delta_{ij}$. In a word, $B_{ij}$ vanishing on diagonal if $i\neq j$. Thus the $m \times n$ matrix is said to be decomposable of the type

\begin{itemize}
	\item[$T_{1}$] {\it with respect to some column bases} if there exist orthonormal column vectors $\{c^{L}_{j}\}$ and $\{c^{R}_{j^{\prime}}\}$ of order $p_{1}$ and $q_{1}$ respectively such that $A$ can be developed into the sum
	\[A = \sum_{i,j} B_{ij} \otimes C_{ij}\]
	where $C_{ij} = c^{L}_{i} c^{R*}_{j}$, $B_{ij} = \left(c_{i}^{L*} A_{kl} c_{j}^{R}\right)$. In this case, $ B_{ij}(k,k)  = \beta_{i}(k)\delta_{ij}$. In a word, $B_{ij}$ vanishing on diagonal if $i\neq j$.
	\item[$T_{1\frac{1}{2}}$] {\it with respect to some mutually orthogonal row coefficients} if it is $T_{1}$ decomposable and additionally, $B_{ij} = B_{ii}\delta_{ij}$, i.e. $B_{ij}$ vanishing if $i\neq j$.
	\item[$T_{2}$] {\it with respect to some row-column bases} if it is $T_{1}$ decomposable and there exist further orthonormal column vectors $\{b^{L}_{i}\}$ and $\{b^{R}_{i^{\prime}}\}$ of order $m^{\prime}$ and $n^{\prime}$ respectively such that $A$ can be developed into the sum
	\[A = \sum_{i,j} \alpha_{ij} B_{i} \otimes C_{j},\]
	where $B_{i} = b^{L}_{i}b_{i}^{R*}$ and $C_{j} = c_{j}^{L}c_{j}^{R*}$.
	\item[$T_{3}$] {\it totally decomposable with respect to some row-column bases} if it is $T_{2}$ decomposable and additionally, there exist $\beta_{i}, \gamma_{j}$ such that $\alpha_{ij} = \beta_{i} \gamma_{j}$ and
	\[A =  B \otimes C \]
	where $B = \sum_{i} \beta_{i} b^{L}_{i} b_{i}^{R*}$ and $C = \sum_{j} \gamma_{j} c_{j}^{L}c_{j}^{R*}$.
\end{itemize}
Equivalent conditions for these assumptions are similar to Table \ref{tab: equivalent conditions} and Theorem 3.1 provided the commutativity relation replaced by (\ref{eq: commutativity 2}).

\section{Conclusion}

There are still possible representations of matrix $B_{ij}(k,l)$ in $T_{1}$. For example, $B^{\prime}_{ij} = \left(c_{k}^{*} A_{ij} c_{l}\right)$. We could develop the matrix $A$ into the sum
\[A = \sum_{i,j} B^{\prime}_{..}(i,j) \otimes C_{ij}\]
and the canonical diagonal form varies according to the representation

\begin{figure}[h!]
	\centering
	\[D_{1}^{\prime} = \begin{bmatrix}
		\diag(B_{11}(i,i)) & B_{12} & \dots & B_{1a}\\
		B_{21} & \diag(B_{22}(i,i)) & \dots & B_{2a}\\
		\vdots &  & \ddots & \vdots \\
		B_{a1} & B_{a2} & \dots & \diag(B_{aa}(i,i)) 
	\end{bmatrix}\]	
	\caption{The canonical diagonal form $D_{1}^{\prime}$}
	\label{fig: T_{1}a}
\end{figure}		

\begin{figure}[h!]
	\centering
	\[D_{1\frac{1}{2}}^{\prime} = \begin{bmatrix}
		\diag(B_{11}(i,i)) & \diag(B_{12}(i,i)) & \dots & \diag(B_{1a}(i,i))\\
		\diag(B_{21}(i,i)) & \diag(B_{22}(i,i)) & \dots & \diag(B_{2a}(i,i))\\
		\vdots &  & \ddots & \vdots \\
		\diag(B_{a1}(i,i)) & \diag(B_{a2}(i,i)) & \dots & \diag(B_{aa}(i,i)) 
	\end{bmatrix}\]	
	\caption{The canonical diagonal form $D_{1\frac{1}{2}}^{\prime}$}
	\label{fig: T_{1}1/2a}
\end{figure}

However, this dosen't indicate explicitly $B_{ij}$ as the projection of $A$ onto $C_{ij}$ so we still adopt the former $B_{ij}$ as our assumption. \nocite{7555316}\nocite{FOMATATI2022180}\nocite{Lathauwer2008}\nocite{doi:10.1137/07070111X}\nocite{zbMATH04180669}\nocite{zbMATH07021224}

\bibliographystyle{amsplain}
\bibliography{journal}

\appendix

\section{Examples of $4\times 4$ matrices}

This problem originates from decomposition of large covariance matrix with hidden structure in spatio-temporal statistics, wireless communication and mathematical physics etc. For example, the white noise problem states that there is a random signal, viewed as a stochastic process on the Minkovski space or a general manifold with 
\begin{equation}
	\left\langle \eta(\mathbf{x},t)\mid \eta(\mathbf{x}^{\prime},t^{\prime}) \right\rangle = \delta(\mathbf{x} - \mathbf{x}^{\prime})\delta(t - t^{\prime})
	\label{eq: white noise}
\end{equation}
where $\delta$ is the Dirac function. The white noise has the identity matrix as its covariance which can be reduced into the tensor product of two factor matrix
\[I_{qp} = I_{p}\otimes I_{q}\]
where $I$ denotes the identity matrix. This demonstration holds true depends on the hidden structure between $\mathbf{x}$ and $t$. In fact, the white noise of form (\ref{eq: white noise}) has the matrix normal density with respect to two $m \times m$ and $n \times n$ Hermite matrices $A$ and $B$ and a $m\times n$ matrix $M$ 
\[(2\pi)^{-mn/2}|{A} |^{-n/2}|{B} |^{-m/2}\operatorname{etr} \left(-\frac {1}{2} {A} ^{-1}(X - {M} ){B} ^{-1}({X} - {M})^{*}\right)\prod_{i,j}x_{ij}\]
where $\operatorname{etr}(A)$ means $\exp (\tr A)$ and $|A|$ the determinant of $A$, if and only if, $\operatorname{vec}(X)$ viewed as an $mn \times 1$ vector pilling up the columns of the $m \times n$ matrix stacked on top of one another, is the multivariate normal distribution $N_{mn}(\operatorname{vec}(M), A \otimes B)$. The soul problem is to decide whether the covariance of $\operatorname{vec}(X)$, an $mn\times mn$ Hermite positive definite matrix, is of the form $A \otimes B$ for some $m\times m$ and $n \times n$ Hermite matrices $A$ and $B$. However, if the order $n$ is a product of two divisors $a$ and $b$, it's still impossible for all $n \times n$ matrices to possess a factorization of Kronecker product of two $a\times a $ matrix $A$ and $b \times b$ matrix $B$. Let's look at this

\begin{example} The following square matrix of order $4$
	\begin{equation}
		\begin{bmatrix}
			1 & 0.5 & 0.2 & 0.1 \\
			0.5 & 1 & 0.5 & 0.2 \\
			0.2 & 0.5 & 1 & 0.5 \\
			0.1 & 0.2 & 0.5 & 1 
		\end{bmatrix}
		\label{eq: example}
	\end{equation}
	is not a Kronecker product of two $2 \times 2$ matrices.
\end{example} 
Therefore, it sounds absurd to presume such a tensor form since it cannot always describe all situations behind the hidden structure of mathematical models. This initiates our study of weak assumptions for matrices. Although not all square matrices of divisible order $n = ab$ are of the form $A \times B$ for some $a\times a$ matrix $A$ and $b \times b$ matrix $B$, it's still possible for a square matrix to have a reduced form of the sum of Kronecker products. The following three examples show how this could be done.

\begin{example}[Partial diagonalization]
	\[\begin{aligned}
		\begin{bmatrix}
			1 & 0.5 & 0.2 & 0.1 \\
			0.5 & 1 & 0.5 & 0.2 \\
			0.2 & 0.5 & 1 & 0.5 \\
			0.1 & 0.2 & 0.5 & 1 
		\end{bmatrix} &=
		\begin{bmatrix}
			0.5	& -0.1\\
			-0.1 & 0.5 
		\end{bmatrix} \otimes 
		\begin{bmatrix}
			0.5	& -0.5\\
			-0.5 & 0.5
		\end{bmatrix} + 
		\begin{bmatrix}
			0.5	& 0.2\\
			0.2 & 1.5 
		\end{bmatrix} \otimes 
		\begin{bmatrix}
			-0.5 & -0.5\\
			0.5 & 0.5
		\end{bmatrix} \\
		& +  
		\begin{bmatrix}
			1.5	& -0.2\\
			-0.2 & 0.5 
		\end{bmatrix} \otimes 
		\begin{bmatrix}
			-0.5 & 0.5\\
			-0.5 & 0.5
		\end{bmatrix} 
		+ 
		\begin{bmatrix}
			1.5	& 0.5\\
			0.5 & 1.5 
		\end{bmatrix} \otimes 
		\begin{bmatrix}
			0.5 & 0.5\\
			0.5 & 0.5
		\end{bmatrix}\\ \\
		&= A_{11} \otimes B_{11} + A_{12} \otimes B_{12} + A_{21} \otimes B_{21} + A_{22} \otimes B_{22}
	\end{aligned}
	\]
	where $\rank(B_{11}) = \rank(B_{12}) = \rank(B_{21}) = \rank(B_{22}) = 1$.
\end{example}

\begin{example}[Partially orthogonal diagonalization]
	\[\begin{aligned}
		\begin{bmatrix}
			1 & 0.5 & 0.2 & 0.3 \\
			0.5 & 1 & 0.3 & 0.2 \\
			0.2 & 0.3 & 1 & 0.5 \\
			0.3 & 0.2 & 0.5 & 1 
		\end{bmatrix} &=
		\begin{bmatrix}
			0.5	& -0.1\\
			-0.1 & 0.5 
		\end{bmatrix} \otimes 
		\begin{bmatrix}
			0.5	& -0.5\\
			-0.5 & 0.5
		\end{bmatrix} 
		+ 
		\begin{bmatrix}
			1.5	& 0.5\\
			0.5 & 1.5 
		\end{bmatrix} \otimes 
		\begin{bmatrix}
			0.5 & 0.5\\
			0.5 & 0.5
		\end{bmatrix}\\
		&= A_{1} \otimes B_{1} + A_{2} \otimes B_{2}
	\end{aligned}
	\]
	where $\rank(B_{1}) = \rank(B_{2}) = 1$ and $B_{1}$ and $B_{2}$ are commutative.
\end{example}

\begin{example}[Totally orthogonal diagonalization]
	\[\begin{aligned}
		\begin{bmatrix}
			1 & 0.15 & 0.15 & 0.5 \\
			0.15 & 1 & 0.5 & 0.15 \\
			0.15 & 0.5 & 1 & 0.15 \\
			0.5 & 0.15 & 0.15 & 1 
		\end{bmatrix} &=
		\frac{6}{5}\begin{bmatrix}
			0.5	& -0.5\\
			-0.5 & 0.5
		\end{bmatrix} \otimes 
		\begin{bmatrix}
			0.5	& -0.5\\
			-0.5 & 0.5
		\end{bmatrix} 
		+ \frac{1}{2} 
		\begin{bmatrix}
			0.5	& -0.5\\
			-0.5 & 0.5
		\end{bmatrix} \otimes 
		\begin{bmatrix}
			0.5 & 0.5\\
			0.5 & 0.5
		\end{bmatrix}\\ 
		& + \frac{1}{2}\begin{bmatrix}
			0.5 & 0.5\\
			0.5 & 0.5
		\end{bmatrix} \otimes 
		\begin{bmatrix}
			0.5	& -0.5\\
			-0.5 & 0.5
		\end{bmatrix} 
		+ \frac{6}{5}  
		\begin{bmatrix}
			0.5 & 0.5\\
			0.5 & 0.5
		\end{bmatrix} \otimes 
		\begin{bmatrix}
			0.5 & 0.5\\
			0.5 & 0.5
		\end{bmatrix}\\ 	\\
		&= c_{11} A_{1} \otimes B_{1} + c_{12} A_{1} \otimes B_{2} + c_{21} A_{2} \otimes B_{1} + c_{22} A_{2} \otimes B_{2}
	\end{aligned}
	\]
	where $\rank(A_{1}) = \rank(A_{2}) = \rank(B_{1}) = \rank(B_{2}) = 1$, $A_{1}$ and $A_{2}$ are commutative and $B_{1}$ and $B_{2}$ are commutative.
\end{example}

\section{Factorization algorithm}

\setcounter{algorithm}{-1}
\begin{algorithm}[htbp]
	\caption{An algorithm for tensor decomposition}
	\begin{algorithmic}
		\Require An $n \times n$ Hermite matrix $A$
		\Ensure $n = p_{1}^{r_{1}} p_{2}^{r_{2}}\cdots p_{t}^{r_{t}}$ with $p_{1},\dots, p_{t}$ distinct primes and $r_{1},\dots, r_{t} \geq 0 $
		\State $m = n$ 
		\State $s \gets 0$ \Comment{Set global variable}
		\While{$m \geq 4$} \Comment{Main algorithm}
		\For{$i = 1: r$}
		\For{$j=1:r[i]$}
		\State $q = p[j]$
		\State $m = n/q$
		\State Run Algorithm 1 with input $A,n,q$
		\EndFor
		\EndFor
		\EndWhile
	\end{algorithmic}
\end{algorithm}

\begin{algorithm}[htbp]
	\caption{An algorithm for $T_{1}$}
	\begin{algorithmic}
		\Require An $n \times n$ Hermite matrix $A$
		\Ensure A natural number $n$ and a prime number $p$ such that $p|n$
		\State $m = n/p$
		\For{$i = 1: m$}
		\For{$j = 1:m$}
		\State $A[i,j] \gets A[(ip+1):(i+1)p,(jp+1):(j+1)p]$
		\EndFor
		\EndFor
		\For{$i,j = 1: m$}
		\If{$A[i,i]A[j,j] = A[j,j] A[i,i] $}
		\State Find the eigenvectors $C \gets [c[1], \dots, c[p]]$ for $A[i,i]$  
		\Else 
		\State $s \gets [s,0]$ \Comment{$T_{1}$ failure!}
		\State \textbf{quit} 
		\EndIf
		\EndFor
		\For{$i = 1: m$}
		\For{$k,l = 1: p$}
		\State $B[i,i][k,l] \gets c[i]^{*}A[k,l]c[i]$		
		\EndFor
		\EndFor
		\State $s \gets [s,1]$ \Comment{$T_{1}$ success!}
		\State Run Algorithm 2 with input $A,n,p$\\
		\Return $B[i,j],C,s$
		
	\end{algorithmic}
\end{algorithm}

\begin{algorithm}[htbp]
	\caption{An algorithm for $T_{1\frac{1}{2}}$}
	\begin{algorithmic}
		\Require An $n \times n$ Hermite matrix $A$
		\Ensure A natural number $n$ and a prime number $p$ such that $p|n$
		\State $m = n/p$
		\For{$i = 1: m$}
		\For{$j = 1:m$}
		\State $A[i,j] \gets A[(ip+1):(i+1)p,(jp+1):(j+1)p]$
		\EndFor
		\EndFor
		\For{$i1,i2 = 1: m$}
		\For{$j1 = i1: m$}
		\For{$j2=i2:m$}
		\If{$A[i1,j1]A[i2,j2] = A[i2,j2] A[i1,j1] $}
		\State Find the eigenvectors $C \gets [c[1], \dots, c[p]]$ for $A[i1,j1]$  
		\Else 
		\State $s \gets [s,0]$ \Comment{$T_{1\frac{1}{2}}$ failure!}
		\State \textbf{quit} 
		\EndIf
		\EndFor
		\EndFor
		\EndFor
		\For{$i = 1: m$}
		\For{$k,l = 1: p$}
		\State $B[i,i][k,l] \gets c[i]^{*}A[k,l]c[i]$		
		\EndFor
		\EndFor
		\State $s \gets [s,2]$ \Comment{$T_{1\frac{1}{2}}$ success!}
		\State Run Algorithm 3 with input $B[i,i],m,p$\\
		\Return $B[i,j],C,s$
	\end{algorithmic}
\end{algorithm}

\begin{algorithm}[htbp]
	\caption{An algorithm for $T_{2}$ and $T_{3}$}
	\begin{algorithmic}
		\Require A cell of $p \times p$ Hermite matrices $B[i]$
		\Ensure A natural number $n$ and a prime number $p$
		\State $a \gets \text{zeros}[p,m]$
		\For{$i = 1: n$}
		\If{$B[i]B[j] = B[j] B[i] $}
		\State Find the eigenvectors $C \gets [c[1], \dots, c[p]]$ and 
		\State eigenvalues $a[:,i] \gets [a[1],\dots, a[p]] $ for $B[i]$  
		\Else 
		\State $s \gets [s,0]$ \Comment{$T_{1\frac{1}{2}}$ failure!}
		\State \textbf{quit} 
		\EndIf
		\EndFor
		\State $s \gets [s,3]$ \Comment{$T_{2}$ success!}
		\State Compute the similar diagonal form $d \gets z^{-1}az$
		\If{$\text{nonzeros}(d) \geq 1$}
		\State $s \gets [s,0]$ \Comment{$T_{3}$ failure!}
		\State \Return $B[i,j],C,s$		
		\Else		
		\State Compute $x \gets A[1,:]$
		\State Solve $y \gets A x$ 
		\State $s \gets [s,4]$ \Comment{$T_{3}$ success!}
		\EndIf
		\State \Return $x,y$		
	\end{algorithmic}
\end{algorithm}

\end{document}